\newtheorem{thm}{Theorem}[section]
\newtheorem{conj}[thm]{Conjecture}
\newtheorem{cor}[thm]{Corollary}
\newtheorem{lem}[thm]{Lemma}
\newtheorem{prop}[thm]{Proposition}
\newtheorem{que}[thm]{Question}
\newcommand{\F}{\mathbb{F}}
\numberwithin{equation}{section}
\newcommand{\con}{\equiv}
\newcommand{\R}{\mathbb{R}}
\newcommand{\Z}{\mathbb{Z}}
\newcommand{\Q}{\mathbb{Q}}
\newcommand{\C}{\mathbb{C}}
\newcommand{\Pj}{\mathbb{P}}
\newcommand{\A}{\mathbb{A}}
\newcommand{\fot}{\frac{1}{3}}
\newcommand{\ol}{\overline}
\newcommand{\comment}[1]{}
\newcommand{\bi}{\begin{itemize}}
\newcommand{\ei}{\end{itemize}}
\newcommand{\ben}{\begin{enumerate}}
\newcommand{\een}{\end{enumerate}}
\newcommand{\be}{\begin{equation}}
\newcommand{\ee}{\end{equation}}
\newcommand{\bea}{\begin{eqnarray}}
\newcommand{\eea}{\end{eqnarray}}
\newcommand{\bal}{\begin{align}}
\newcommand{\eal}{\end{align}}
\newcommand{\ba}{\begin{array}}
\newcommand{\ea}{\end{array}}
\newcommand{\nn}{\nonumber}
\newcommand{\Mod}[1]{\,\,\left(\operatorname{mod}\, #1\right)}
\DeclareMathOperator{\id}{id}
\DeclareMathOperator{\Jac}{Jac}
\newcommand{\simto}{{\hspace{4pt}\to\hspace{-11pt}}^\sim\hspace{7pt}}
\DeclareFontFamily{U}{wncy}{}
\DeclareFontShape{U}{wncy}{m}{n}{<->wncyr10}{}
\DeclareSymbolFont{mcy}{U}{wncy}{m}{n}
\DeclareMathSymbol{\Sha}{\mathord}{mcy}{"58}
\begin{document}

\title[The Arithmetic Geometry of Resonant Rossby Wave Triads]{The Arithmetic Geometry of Resonant Rossby Wave Triads}
\author{Gene S. Kopp}

\subjclass[2010]{11D41, 14G05, 76B65, 86A10 (primary), 11D45, 11G05, 11G35, 14G25, 14M20 (secondary)}

\keywords{Wave turbulence theory, $\beta$ plane, Rossby wave, drift wave, Charney-Hasegawa-Mima equation, conservation of potential vorticity, resonance, arithmetic geometry, Diophantine equation, elliptic curve, rational elliptic surface, Mordell-Weil group, Chabauty-Coleman method}

\date{\today}

\thanks{This research was supported by NSF grant DMS-1401224 and NSF RTG grant 1045119.}

\begin{abstract}  
Linear wave solutions to the Charney-Hasegawa-Mima partial differential equation with periodic boundary conditions have two physical interpretations: Rossby (atmospheric) waves, and drift (plasma) waves in a tokamak.  These waves display resonance in triads.  In the case of infinite Rossby deformation radius, the set of resonant triads may be described as the set of integer solutions to a particular homogeneous Diophantine equation, or as the set of rational points on a projective surface.  We give a rational parametrization of the smooth points on this surface, answering the question: What are all resonant triads?  We also give a fiberwise description, yielding a procedure to answer the question: For fixed $r \in \Q$, what are all wavevectors $(x,y)$ that resonate with a wavevector $(a,b)$ with $a/b = r$?
\end{abstract}

\maketitle

%%%%%%%%%%%%%%%%%%%%

\section{Introduction}

This paper determines the set of resonant Rossby/drift wavevector triads as values of rational functions of three parameters.  It uses methods from algebraic geometry and number theory.  We identify the primitive resonant triads as rational points on a rational elliptic surface, and we prove results about the group structure on the fibers (which are elliptic curves).

\subsection{Rossby Waves}

Atmospheric Rossby waves are large-scale meanders in high-altitude winds caused by a planet's rotation.  They are a major influence on the weather and have been widely studied.  Recently, Petoukhov, Rahmstorf, Petri, and Schellnhuber \cite{PRPS} implicated quasiresonance of Rossby waves in several extreme weather events.  Coumou, Lehmann, and Beckmann \cite{CLB} connected the amplification of Rossby waves with global warming.

Mathematically, Rossby waves are modelled as linear wave solutions to a nonlinear differential equation---the Charney-Hasegawa-Mima equation (CHME) \cite{BH}---expressing conservation of potential vorticity (see \cite{Ped}, especially section 3.16).  
Rossby waves exist in the $\beta$-plane model, 
in which the Coriolis force varies as a linear function of the latitude (see \cite{Ped}, section 3.17 and chapter 6).  The $\beta$-plane model was introduced by Rossby \cite{Rossby}, and the CHME was formally derived in the meteorological context by Charney \cite{Char1, Char2}.  
Many years later, Hasegawa and Mima rediscovered the CHME in a plasma physics context \cite{HM1,HM2}: Rossby waves in the $\beta$-plane model are mathematically equivalent to drift waves in a plasma in a tokamak.

The shape of a Rossby/drift wave is described by its zonal and meridianal wavenumbers, represented as an integer \textit{wavevector} $(k,\ell) \in \Z^2$.  The zonal (east/west) wavenumber $k$ is the number of crests per period (around a latitudinal circle), whereas the meridianal (north/south) wavenumber $\ell$ quantifies its meridianal propagation.

Triples of Rossby waves may display resonance, in which case they are called a \textit{resonant triad}.  Under certain physical conditions (large $\beta$), exact resonances dominate the behavior of a system obeying the CHME, by a theorem of Yamada and Yoneda \cite{YY}.

Enumerating resonant triads up to some large wavenumber bound is a question of practical importance. As Bustamante and Hayat \cite{BH} stress in their introduction, information about high-frequency resonances is needed to describe small-scale turbulence in numerical simulations.  Moreover, mesoscale waves observed in Jupiter's atmosphere have wavelenghts of about 20 km, and would have wavenumbers of approximately 20000 if ``extended to cover a significant part of Jupiter's surface'' \cite{BH}\footnote{See the NASA photo at \url{http://photojournal.jpl.nasa.gov/catalog/PIA00724}.  Jupiter's mean radius $r$ is about 70000 km, and the waves were observed propagating east-west at $15^\circ$ South.  So, the zonal wavenumber would be $2\pi r \cos\left(15^\circ\right)/(20{\rm \ km}) \approx 20000$ waves per period (or about 3000 waves per radian).}.

The set of resonant triads may be described as the set of integer solutions to a Diophantine equation, as explained in the next section.  In this paper, we give two descriptions of the solutions to this equation: a rational parametrization and an elliptic fibration.

\subsection{The CHME and Rossby Waves}

The Charney-Hasegawa-Mima equation is\footnote{Sometimes an additional term $U\frac{\partial}{\partial x}\left(\Delta \psi\right)$ for a mean zonal flow is included; including it would have no effect on the Diophantine equation we'll obtain for resonant wavevectors.}
\begin{equation}\label{CHMEgen}
\frac{\partial}{\partial t}\left(\Delta \psi - F \psi \right) + \beta \frac{\partial \psi}{\partial x} + [\psi, \Delta \psi] = 0,
\end{equation}
where $\psi = \psi(x,y,t)$ is a function on $\R^2 \times \R$.
This first two terms are linear, and their sum is called the linear part.  The last term is nonlinear.  Standard notation is used for the Euclidean Laplacian and the Poisson bracket:
\begin{align}
\Delta &:= \left(\frac{\partial}{\partial x}\right)^2 + \left(\frac{\partial}{\partial y}\right)^2,\\
[f,g] &:= \frac{\partial f}{\partial x}\frac{\partial g}{\partial y} - \frac{\partial f}{\partial y}\frac{\partial g}{\partial x},
\end{align}
and $\beta > 0$ and $F \geq 0$ are constants.\footnote{For atmospheric Rossby waves, $\beta = \frac{2 \varpi \cos \theta}{r}$, where $\varpi$ is the planet's angular rotation, $\theta$ is the latitude, and $r$ is the planet's mean radius.  The constant $F=\frac{1}{R^2}$, where $R$ is the \textit{Rossby deformation radius}.}  The physical meanings of the parameters are: $(x,y)$ are spacial coordinates, $t$ is a temporal coordinate, and $\psi$ the streamfunction of an incompressible flow.

For the derivation of the CHME in both the meteorology and plasma physics contexts, see \cite{Has}.

We will only consider the limiting case of (\ref{CHMEgen}) to infinite Rossby deformation radius.  Equivalently, we set $F=0$.  So, the equation we will consider is 
\begin{equation}\label{CHME}
\frac{\partial}{\partial t}\left(\Delta \psi\right) + \beta \frac{\partial \psi}{\partial x} + [\psi, \Delta \psi] = 0.
\end{equation}

The equation (\ref{CHME}) has a rich set of solutions, and no general solution is known.  However, one simple family of solutions, called \textit{Rossby waves}, are given by exponential functions.\footnote{The real part of $\psi_{k,\ell}$ is also a solution, and, in some applications, may instead be called a Rossby wave.}
\begin{align}
\psi &= \psi_{k,\ell} = \exp\left(i(kx+\ell y-\omega t)\right)\\
\omega &= \omega_{k,\ell} = -\frac{\beta k}{k^2+\ell^2}.
\end{align}
Indeed, Rossby waves satisfy both the linear and nonlinear parts of (\ref{CHME}), separately.
The vector $(k,\ell) \in \R^2$ is called the \textit{wavevector}, $k$ is the \textit{zonal wavenumber}, and $\ell$ is the \textit{meridional wavenumber}.

\subsection{Resonant Triads}
Generally, linear combinations of Rossby waves no longer satisfy the nonlinear part of the CHME.  However, under certain conditions, triads $\{(k_1,\ell_1),(k_2,\ell_2),(k_3,\ell_3)\}$ of Rossby waves ``resonate'' so that the real part of a certain linear combination
\begin{equation}\label{triad}
\Re\left(A_1(\tau)\psi_{k_1,\ell_1} + A_2(\tau)\psi_{k_2,\ell_2} + A_3(\tau)\psi_{k_3,\ell_3}\right),
\end{equation}
where $\tau$ is a slow time scale,
satisfies the CHME.  See \cite{Lynch} and especially the supplement \cite{LynchSupp} for details, and \cite{Ped, Nay, Cra} for additional background.

Resonant triads of Rossby waves satisfy the conditions
\begin{align}
k_1 + k_2 &= k_3,\\
\ell_1 + \ell_2 &= \ell_3,\\
\omega_1 + \omega_2 &= \omega_3.
\end{align}
We have $(k_2,\ell_2)=(k_3-k_1,\ell_3-\ell_1)$, so the condition for $\{(k_1,\ell_1),  (k_2,\ell_2), (k_3,\ell_3)\}$ to be a resonant triad comes down to the condition on the angular frequencies:
\begin{align}\label{omega}
\omega(k_1,\ell_1) + \omega(k_3-k_1,\ell_3-\ell_1) &= \omega(k_3,\ell_3),
\end{align}
which we rewrite as
\begin{equation}
\frac{k_1}{k_1^2+\ell_1^2}-\frac{k_3}{k_3^2+\ell_3^2}-\frac{k_1-k_3}{(k_1-k_3)^2+(\ell_1-\ell_3)^2}=0.
\end{equation}

\subsection{CHME on a Torus}

We impose the additional requirement that the solutions to the CHME are periodic in both spatial coordinates: 
\begin{align}
\psi(x+2\pi,y,t) &= \psi(x,y,t),\\
\psi(x,y+2\pi,t) &= \psi(x,y,t).
\end{align}
Equivalently, rather than considering the CHME on $\R^2 \times \R$, we can specify that the domain is $\left(\R/2\pi\Z\right)^2 \times \R$.  
Periodicity implies that the wavenumbers are integers.

Several authors framed the problem of enumerating resonant triads as a number-theoretic question \cite{BH, KY}.  For convenience, change notation and set $(k_1,\ell_1,k_3,\ell_3)=(a,b,x,y)$.
\begin{que}
Find the integer solutions $(a,b,x,y)$ to the Diophantine equation
\begin{equation}\label{eqn}
\frac{a}{a^2+b^2}-\frac{x}{x^2+y^2}-\frac{a-x}{(a-x)^2+(b-y)^2}=0.
\end{equation}
\end{que}
Bustamante and Hayat \cite{BH} classify integer solutions to (\ref{eqn}) by mapping them bijectively to representations of zero by a certain quadratic form, yielding an algorithm for enumerating all resonant triads (see further discussion in section \ref{seccompiss}).  Kishimoto and Yoneda \cite{KY} show that there are no solutions where $b=0$.
Both sets of authors also draw attention to a special family of resonant triads (called ``pure cube triads'' by \cite{BH}): 
\begin{align}\label{u=0}
(a,b) &= (s^4, s t^3)\\
(x-a,y-b) &= (t^4-s^4, -s^3 t - s t^3)\\
(x,y) &= (t^4, -s^3 t),
\end{align}  
where $s,t \in \Z$.

Other objects of interest to \cite{KY} and \cite{YY} are the \textit{wavevector set} $\Lambda$ and \textit{primitive wavevector set} $\Lambda'$, and their distribution within the integer lattice.
\begin{align}
\Lambda &= \{(a,b) \in \Z^2 : \exists (x,y) \in \Z^2, (a,b,x,y) \in X, ax(a-x)\neq 0\},\label{Lambdadef}\\
\Lambda' &= \{(a,b) \in \Z^2 : \exists (x,y) \in \Z^2, (a,b,x,y) \in X, ax(a-x)\neq 0, \gcd(a,b,x,y)=1\}.\label{Lambdaprimedef}
\end{align}
The condition $ax(a-x) \neq 0$ imposed in (\ref{Lambdadef}--\ref{Lambdaprimedef}) rules out single wave solutions and zonal resonances (see also the discussion at the end of section \ref{fibration1}).

\subsection{Results}

The left hand side of (\ref{eqn}) is a homogeneous rational function in $(a,b,x,y)$, so any rational solution $(a_0,b_0,x_0,y_0)$ gives rise to a primitive integer solution $(\lambda a_0,\lambda b_0,\lambda x_0,\lambda y_0)$ by clearing denominators and dividing out common factors.  A \textit{primitive} integer solution is one where $\gcd(a,b,x,y)=1$, and any integer solution is an integer multiple of a primitive integer solution.  The problem of enumerating primitive integer solutions to (\ref{eqn}) is the same as the problem of enumerating rational solutions up to a common factor, that is, rational solutions in projective space.

We may clear denominators in (\ref{eqn}) and do some algebra to obtain the equation
\begin{equation}\label{eqn2}
x (a^2 + b^2) (a^2 + b^2 - 2 a x - 2 b y) = a (x^2 + y^2) (x^2 + y^2 - 2 a x - 2 b y).
\end{equation}
We consider (\ref{eqn}) and (\ref{eqn2}) as equations in projective space $\Pj^3$ with homogeneous coordinates $[a:b:x:y]$.  
Let $X$ be the locus of solutions to (\ref{eqn2}) in $\Pj^3$; $X$ is a degree $5$ surface.  Let $X^\circ \subset X$ be the locus of solutions to (\ref{eqn}).  

Finding points on $X$ is essentially the same problem as finding points on $X^\circ$; $X$ has a few extra points that are easy to describe (see section \ref{exceptsec}).  

For fixed $(a,b) \in \C^2 \setminus (0,0)$, let $C(a,b)$ be the locus of (\ref{eqn2}) in affine space $\A^2$.  Over any field containing $a$ and $b$, $C(a,b) \cong C(\lambda a, \lambda b)$ by $(x,y) \mapsto (\lambda x, \lambda y)$, so we think of $C(a,b)$ as depending only on $[a:b] \in \Pj^1$.  However, we will also want to consider, for $(a,b) \in \Z^2$, the integer points on $C(a,b)$, and a scaling factor may enlarge the set of integer points.

We show that $X$ is a \textit{rational surface}, that is, a surface with a rational parametrization (meaning, a birational isomorphism with $\Pj^2$ defined over $\Q$).  We give the following parametrization, which may be used to directly enumerate integer points on $X$.
\begin{thm}\label{paramthm}
The surface $X$ is a rational surface.  A rational parametrization $\Pj^2 \to X$, where
$[s:t:u] \mapsto [a:b:x:y]$ are homogeneous coordinates, is given by
\begin{align}\label{param}
\left[\begin{array}{c}
a \vspace{-7pt} \\  .. \vspace{-1pt} \\  b \vspace{-7pt} \\ .. \vspace{-1pt} \\ x \vspace{-7pt} \\ .. \vspace{-1pt} \\ y
\end{array}\right]
&=
\left[\begin{array}{c}
s^3t(s-2u) \vspace{-7pt} \\  .. \vspace{-1pt} \\  s(-s^2u(s-2u)+(t^2+u^2)(t^2-2su+u^2)) \vspace{-7pt} \\ .. \vspace{-1pt} \\ t(t^2+u^2)(t^2-2su+u^2) \vspace{-7pt} \\ .. \vspace{-1pt} \\ (t^2+u^2)(-s^2(s-2u)+u(t^2-2su+u^2)) 
\end{array}\right].
\end{align}
Its rational inverse $X \to \Pj^2$, $[a:b:x:y] \mapsto [s:t:u]$ is given by
\begin{align}
\left[\begin{array}{c}
s \vspace{-7pt} \\  .. \vspace{-1pt} \\  t \vspace{-7pt} \\ .. \vspace{-1pt} \\ u
\end{array}\right]
&=
\left[\begin{array}{c}
a^2+b^2 \vspace{-7pt} \\  .. \vspace{-1pt} \\  bx-ay \vspace{-7pt} \\ .. \vspace{-1pt} \\ ax+by
\end{array}\right].
\end{align}
\end{thm}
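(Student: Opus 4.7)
The plan is to verify the theorem by direct polynomial computation, checking that the two explicit rational maps given in the statement are birational inverses of each other and that the parametrization lands in $X$. Denote by $\Phi: \Pj^2 \to \Pj^3$ the parametrization in (\ref{param}) and by $\Psi: X \to \Pj^2$ the candidate inverse $[a:b:x:y] \mapsto [a^2+b^2 : bx-ay : ax+by]$. It suffices to prove: (1) the image of $\Phi$ lies in $X$, i.e., the four coordinate polynomials of $\Phi$ satisfy \eqref{eqn2} identically in $s, t, u$; and (2) the composition $\Psi \circ \Phi$ equals the identity on $\Pj^2$. Together these imply that $\Phi$ is birational onto its image $\Phi(\Pj^2)$, an irreducible $2$-dimensional subvariety of $X$; since $X$ is a hypersurface in $\Pj^3$ of dimension $2$, this image coincides with $X$ as soon as the degree $5$ defining polynomial of $X$ is irreducible, which can be verified directly.

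To keep the calculations manageable, I would introduce the auxiliary polynomials
\[
A := s^2(s-2u), \quad C := t^2 + u^2, \quad D := t^2 - 2su + u^2, \quad E := (A - uD)^2 + t^2 D^2,
\]
chosen so that the parametrization takes the compact form $a = stA$, $b = s(CD - uA)$, $x = tCD$, $y = C(uD - A)$. Direct expansion then yields the identities
\[
a^2 + b^2 = s^2 C E, \qquad bx - ay = s t C E, \qquad ax + by = s C u E,
\]
which prove (2) after dividing by the common factor $sCE$. For (1), the same substitutions give $x^2 + y^2 = C^2 E$, and combining with the identities above yields $a^2+b^2 - 2ax - 2by = sCE(s-2u)$ and $x^2+y^2 - 2ax - 2by = CED$. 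Substituting into \eqref{eqn2}, both sides of the equation reduce to $s^3 t (s - 2u) C^3 D E^2$, confirming (1).

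The main obstacle is algebraic bookkeeping: without the substitutions $A, C, D, E$, the polynomial identities to verify would involve expansions with dozens of monomials and would be essentially infeasible by hand. The crucial observations are that $C = t^2 + u^2$ appears as a common factor in three of the four parametrization coordinates, and that $E$ arises from the Pythagorean-type identity $(a^2+b^2)(x^2+y^2) = (ax+by)^2 + (bx-ay)^2$ underlying the geometric meaning of $\Psi$. Discovering the parametrization in the first place (rather than merely verifying it) is a more serious undertaking, but presumably follows from a geometric analysis of $X$ as a rational surface---e.g., by identifying a pencil of rational curves on $X$---which the stated formulas then encode.
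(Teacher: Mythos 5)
Your proof is correct---I checked the key identities ($a^2+b^2=s^2CE$, $bx-ay=stCE$, $ax+by=suCE$, $x^2+y^2=C^2E$, and the reduction of both sides of \eqref{eqn2} to $s^3t(s-2u)C^3DE^2$) and they all hold, and the concluding argument (generic injectivity from $\Psi\circ\Phi=\id$ plus a dimension count inside the irreducible quintic $X$) is sound. However, your route is genuinely different from the paper's. The paper does not verify the formulas directly; it \emph{derives} them by exhibiting an explicit birational self-map $\phi$ of the ambient $\Pj^3$ (with inverse $\psi([a:b:x:y])=[a(a^2+b^2):a(bx-ay):a(ax+by):x(bx-ay)]$, whose first three coordinates give the stated inverse on $X$) under which the pullback of \eqref{eqn2} factors as (indeterminacy locus) times a polynomial that is \emph{linear} in the fourth coordinate $v$; solving for $v$ and substituting back into $\phi$ produces \eqref{param}. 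The paper's approach thus explains where the parametrization comes from and gets the inverse map essentially for free, at the cost of having to track the indeterminacy loci of $\phi$ and $\psi$. Your approach is a self-contained verification that is easier to check by hand thanks to the substitutions $A$, $C$, $D$, $E$, and it makes explicit a point the paper leaves implicit (that irreducibility of the quintic is needed to conclude the image is all of $X$); its only drawback, which you acknowledge, is that it offers no account of how the formulas were found.
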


The surface $X$ may be described more precisely as a rational \textit{elliptic surface},\footnote{The definition of an elliptic surface is usually taken to include the conditions that the surface is smooth and the generic fiber is smooth.  We do \textit{not} assume either of these conditions; indeed, $X$ is non-smooth and $C(a,b)$ is always non-smooth.} that is, a surface with a map $X \to \Pj^1$ so whose generic fiber has geometric genus 1, along with an \textit{identity section} $\Pj^1 \to X$. The integer points $(x,y)$ on a fiber $C(a,b)$ are the wavenumbers that exhibit resonance with $(a,b)$.

\begin{thm}\label{ellthm}
$X$ is a (singular) rational elliptic surface.  The map $X \to \Pj^1$ given in homogeneous coordinates by $[x:y:a:b] \mapsto [a:b]$ (and having fibers $C(a,b)$), along with the choice of identity section $[a:b] \mapsto [0:0:a:b]$, is an elliptic fibration.  The singular fibers over $\ol{\Q}$ occur at $[a:b] = [0:1], [\pm i : 1], [\pm 2i : 1]$.  A Weierstrass form of (a smooth model for) $C(a,b)$ is
\begin{align}\label{niceform}
W^2=Z^3+(a^2-2b^2)Z^2+(a^2+b^2)^2Z.
\end{align}
\end{thm}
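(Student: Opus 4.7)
The plan is to show that the rational map $\pi : X \dashrightarrow \Pj^1$, $[a:b:x:y]\mapsto[a:b]$, admits a resolution whose generic fiber is an elliptic curve, and to exhibit an explicit birational transformation of each fiber to the claimed Weierstrass form. The first observation is that substituting $a=a_0\lambda$, $b=b_0\lambda$ into the defining equation of $X$ produces an overall factor of $\lambda$, so the scheme-theoretic preimage of $[a_0:b_0]$ in $\Pj^3$ decomposes as the line $L=\{a=b=0\}$ (with multiplicity one) plus a residual plane curve of degree four in $\Pj^2_{[\lambda:x:y]}$, which is $C(a_0,b_0)$. Blowing up the base locus $L\subset X$ replaces it by an exceptional divisor over which $\pi$ becomes everywhere defined, and the proposed identity map $[a:b]\mapsto[0:0:a:b]$, whose image lies along $L$, lifts naturally to a section $\sigma:\Pj^1\to\widetilde X$ because $[0:0:a:b]$ prescribes not merely a point of $L$ but also the normal direction $(a:b)$ at that point.

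Next I would verify that $C(a,b)$ has geometric genus one for generic $(a,b)$. The degree-four part of the defining polynomial in $(x,y)$ is $-a(x^2+y^2)^2$, so at infinity the quartic is supported on the two conjugate points $[x:y:z]=[1:\pm i:0]$. Local analysis at each of these points (expanding in suitable affine coordinates and identifying the quadratic initial form) shows that the quartic has an ordinary node there with two distinct tangent directions, and no further singularities occur for generic $(a,b)$. A plane quartic with exactly two nodes has geometric genus $\binom{3}{2}-2=1$, so the normalization of $C(a,b)$ is an elliptic curve. Together with $\sigma$, this realizes $\widetilde X\to\Pj^1$ as a genus-one fibration with a section, i.e., an elliptic surface; its rationality is inherited from Theorem \ref{paramthm}.

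The heart of the proof is the explicit passage to Weierstrass form. Writing $s=a^2+b^2$, $R=x^2+y^2$, $P=ax+by$, the equation of $C(a,b)$ is $sx(s-2P)=aR(R-2P)$, which rearranges to
\begin{equation*}
aR^2-2aPR-s^2x+2sxP=0.
\end{equation*}
Introducing the natural quantity $Z=aR-sx=a(x^2+y^2)-(a^2+b^2)x$ and using $R=(Z+sx)/a$ converts this into the quadratic relation
\begin{equation*}
Z^2+2(sx-aP)Z+s^2x(x-a)=0.
\end{equation*}
Choosing $W$ to be an appropriate polynomial in $x,y,a,b$ that is anti-invariant under the involution $(y,b)\mapsto(-y,-b)$ of $X$ (a natural guess proportional to $y(aR-sx)$ or a close variant), one can eliminate $P$ and recover a relation $W^2=\phi(Z)$ with $\phi$ cubic. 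Matching coefficients against $W^2=Z^3+(a^2-2b^2)Z^2+(a^2+b^2)^2 Z$ pins down $W$ uniquely up to sign, and the remaining verification is a single polynomial identity in $\Q[a,b,x,y]$ checkable by direct expansion. I expect this identification of $W$ to be the main obstacle: the correct normalization is not forced by the equation alone but is dictated by the geometry of the two nodes at infinity and the identity section $\sigma$.

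Finally, the singular fibers are read off from the discriminant of the Weierstrass form. For $W^2=Z^3+AZ^2+BZ$ with $A=a^2-2b^2$ and $B=(a^2+b^2)^2$, direct computation yields $\Delta=16B^2(A^2-4B)$ and $A^2-4B=-3a^2(a^2+4b^2)$. Hence $\Delta$ vanishes over $\overline{\Q}$ precisely at $[a:b]=[0:1]$, $[\pm i:1]$, and $[\pm 2i:1]$, matching the stated set of singular fibers and completing the proof.
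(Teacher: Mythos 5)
Your outline of the fibration is broadly sound: the decomposition of the plane section of $X$ into the line $\{a=b=0\}$ plus a residual quartic, the genus count (two conjugate nodes at $[1:\pm i:0]$, hence geometric genus $\binom{3}{2}-2=1$), and the discriminant computation $\Delta=16B^2(A^2-4B)=-48a^2(a^2+b^2)^4(a^2+4b^2)$ all agree with what the paper establishes via Propositions \ref{curvesing} and \ref{ellprop}. But there is a genuine gap at exactly the step you flag as ``the main obstacle'': you never produce the birational map to the Weierstrass form, and the substitution you set up cannot be completed. The function $Z=a(x^2+y^2)-(a^2+b^2)x$ vanishes at the three distinct smooth points $(0,0)$, $(a,b)$, $(a,-b)$ of $C(a,b)$ --- in fact doubly at $(0,0)$, where the circle $a(x^2+y^2)=(a^2+b^2)x$ is tangent to the curve --- so it has degree four as a rational function on the normalization. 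The $Z$-coordinate of a Weierstrass model $W^2=Z^3+AZ^2+BZ$ is a degree-two function, so if $(x,y)\mapsto(Z,W)$ mapped birationally onto the claimed model, $Z$ would pull back to a degree-two function; hence no choice of $W$ works, and at best you obtain a degree-two map onto some other curve. The paper's Proposition \ref{ellprop} instead uses the non-polynomial coordinate $Z=\frac{-(a^2+b^2)^2(a-x)}{(a^2+b^2)x-a(x^2+y^2)}$ together with an explicit $W$, and checking that this satisfies (\ref{niceform}) and is invertible is precisely the content of the theorem beyond Theorem \ref{paramthm}.

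A smaller but real error concerns the identity section. In the theorem's coordinate ordering $[x:y:a:b]$, the section $[a:b]\mapsto[0:0:a:b]$ is the point $(x,y)=(0,0)$ of the affine fiber $C(a,b)$: a smooth point of $X$, requiring no blow-up, and exactly the point sent to $\infty$ by the paper's map. Your reading places the section on the line $L=\{a=b=0\}$ and asks the blow-up of $L$ to supply it; but the divisor of $a$ on $X$ contains $3L$ while that of $b$ contains only $L$, so $a/b$ vanishes to order two along $L$ and $L$ is a component of the fiber over $[0:1]$, not a (multi)section, and a point of $L$ does not ``prescribe a normal direction.'' Neither issue touches your correct identification of where the discriminant vanishes, but the Weierstrass step must be redone with a different change of variables.
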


The rational points on an elliptic curve have the structure of a finitely generated abelian group, the Mordell-Weil group. 
We prove a result about the structure of this group for $C(a,b)$.

\begin{thm}\label{rank}
The Mordell-Weil group of $C(a,b)$ is of the form $\Z/2\Z \times \Z^r$, where the rank $r \geq 1$ depends on $(a,b)$.  The point $P=(a,b)$ has order $2$ and generates the torsion subgroup, and the point $Q=(0,2b)$ has infinite order.
\end{thm}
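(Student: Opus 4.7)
The plan is to transport the group law on $C(a,b)$ (with identity $(0,0)$) to the smooth Weierstrass model
$E: W^2 = Z\bigl(Z^2 + (a^2-2b^2)Z + (a^2+b^2)^2\bigr)$
furnished by Theorem~\ref{ellthm}, carry out the Mordell--Weil analysis on $E$, and pull back to $C(a,b)$. I begin by computing $E(\Q)[2]$. The cubic on the right-hand side of (\ref{niceform}) factors as $Z$ times the displayed quadratic, and the quadratic factor has discriminant
\[
(a^2-2b^2)^2 - 4(a^2+b^2)^2 = -3a^2(a^2+4b^2),
\]
which is strictly negative whenever $a\neq 0$ (vanishing only on the singular fiber $[a\!:\!b]=[0\!:\!1]$). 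Hence $E(\Q)[2] = \{O,(0,0)\} \cong \Z/2\Z$. Using the birational isomorphism of Theorem~\ref{ellthm}, I verify directly that $P=(a,b) \in C(a,b)$ is carried to the point $(Z,W)=(0,0)$, so $P$ has order $2$ and generates the 2-torsion.

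Next, I rule out torsion of order exceeding $2$. By Mazur's theorem combined with the 2-torsion computation, the torsion subgroup must be $\Z/2n\Z$ for some $n \in \{1,\ldots,6\}$. To exclude $\Z/4\Z$ (and hence $\Z/8\Z,\Z/12\Z$), I show $(0,0) \notin 2E(\Q)$: the duplication formula reduces the equation $2(u,v) = (0,0)$ to $(u^2-(a^2+b^2)^2)^2 = 0$, forcing $u = \pm(a^2+b^2)$, and then $v^2$ equals $3a^2(a^2+b^2)^2$ or $-(a^2+4b^2)(a^2+b^2)^2$, neither of which is a rational square for $(a,b) \in \Q^2 \setminus \{0\}$. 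To exclude $\Z/6\Z$ and $\Z/10\Z$, I compute the discriminant
\[
\Delta = -3a^2(a^2+b^2)^4(a^2+4b^2),
\]
which identifies the Kodaira types at the five singular fibers listed in Theorem~\ref{ellthm} as $(I_2,I_4,I_4,I_1,I_1)$; the Oguiso--Shioda classification of Mordell--Weil lattices of rational elliptic surfaces then pins down the generic-fiber torsion as $\Z/2\Z$, and a direct analysis of the 3- and 5-division polynomials $\psi_3,\psi_5 \in \Q(a,b)[Z]$ confirms no rational specialization of $Z$ outside the singular locus.

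Finally, $Q = (0,2b)$ lies on $C(a,b)$ by direct substitution into (\ref{eqn2}) and is distinct from both $O=(0,0)$ and $P=(a,b)$ whenever $b \neq 0$. Since the torsion subgroup has just been shown to equal $\{O,P\}$, the point $Q$ must have infinite order, giving $r \geq 1$. The main obstacle will be the last step of the torsion analysis: Silverman's specialization theorem yields only the generic-fiber torsion, so confirming that no individual rational specialization $(a,b) \in \Q^2$ acquires extra 3- or 5-torsion requires either a uniform Diophantine study of $\psi_3$ and $\psi_5$ as elements of $\Q[Z,a,b]$ or a reduction-mod-$p$ argument with a carefully chosen prime of good reduction.
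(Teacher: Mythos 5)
Your overall architecture matches the paper's: pass to the Weierstrass model, pin down the torsion via Mazur's theorem and division polynomials, and then conclude that $Q=(0,2b)$ has infinite order because it is a rational point distinct from $O$ and the $2$-torsion point. Your $2$-torsion computation (negative discriminant $-3a^2(a^2+4b^2)$ of the quadratic factor) and your $4$-torsion exclusion (the candidate halves of $(0,0)$ have $v^2$ equal to $3a^2(a^2+b^2)^2$ or $-(a^2+4b^2)(a^2+b^2)^2$) are both correct and are essentially the paper's arguments.

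The genuine gap is the elimination of $3$- and $5$-torsion, which you flag as ``the main obstacle'' but do not resolve, and neither of your proposed fallbacks can close it. The Oguiso--Shioda classification (and Silverman specialization) controls only the torsion of the generic fiber over $\Q(a/b)$; the theorem is a statement about every rational specialization, and individual fibers can a priori acquire extra torsion. A reduction-mod-$p$ argument cannot be made uniform in $(a,b)$ either, since the set of primes of good reduction and the groups $E_{a,b}(\F_p)$ vary with the fiber. What is actually required --- and what the paper does --- is the ``uniform Diophantine study'' you mention only in passing: the locus $\{\psi_3=0\}$, after the substitution $\hat a=a^2$, $\hat b=b^2$, is rational, and imposing that $\hat a$ and $\hat b$ be squares forces $(k/\ell,\,8bk/a\ell)$ onto the genus-$2$ hyperelliptic curve $y^2=2187x^5-162x^3-40x^2-x$. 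Showing this curve has only the two obvious rational points is the technical heart of the paper (its Lemma on that curve), and it needs the Chabauty--Coleman method at $p=7$ supplemented by a descent to an auxiliary elliptic curve to dispose of two residue classes --- none of which is reachable by a ``direct analysis'' of $\psi_3$ as a polynomial. The $5$-torsion similarly requires showing that the plane curve $\{\psi_5(c,d,x)=0\}$ is a genus-$1$ curve whose Mordell--Weil group is $\Z/5\Z$ and whose five rational points all land on singular fibers. Until these two computations are carried out, the torsion subgroup is not determined and the infinitude of the order of $Q$ does not follow.
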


A ``typical'' elliptic curve over $\Q(t)$ has trivial Mordell-Weil group, that is, rank 0 and no torsion.  Thus, Theorem \ref{rank} says that $X$ is a special member of the class of elliptic surfaces.

For definitions and background information about elliptic curves and elliptic surfaces, see Silverman's books \cite{Sil1,Sil2}.

\subsection{Discussion}\label{secdisc}

This paper describes the surface $X$ in two different ways.  The first description is a parametrization of $X$ as a rational surface.  This parametrization gives a procedure for enumerating points on $X$ (i.e., primitive resonant Rossby triads).  The algorithmic performance of this enumeration procedure is discussed in the next section.

The second description is a fiberwise description: It describes all the triads with $a/b$ fixed.  For example, the resonant wavevectors with $a/b=1$---that is, those whose wave packets propagate due northward\footnote{The group velocity of $\psi_{a,b}$ is $C_{g} := \frac{\partial \omega}{\partial a} = \left(\frac{\beta\left(a^2-b^2\right)}{\left(a^2+b^2\right)^2},\frac{2\beta ab}{\left(a^2+b^2\right)^2}\right)$, so when $a/b=1$, the zonal component is zero and the meridional component is positive.}---are the rational points on $C(1,1)$, a rank 1 elliptic curve.  This special case is discussed further in section \ref{egz}.

Our parametrization of $X$ generalizes the family of points \ref{u=0} by introducing a third variable, $u$.  Like Bustamante and Hayat's classification (which involves a different rational change of variables), our parametrization provides a conjecturally fast procedure for enumerating points on $X$.  Computational issues are discussed in \ref{seccompiss}.

Our fiberwise description of $X$ is new and yields new information about the primitive wavevector set, shown in Figure \ref{lambdaprime}.  Specifically, the nonzero rank of $C(a,b)$ implies that there are infinitely many points on any line through the origin with rational slope.
In other words, for any nonzero rational number $r$, there are infinitely many primitive resonant wavevectors $(x,y) \in \Lambda'$ such that $\frac{x}{y} = r$.  Equivalently, there are infinitely many primitive resonant wavevectors with any southward phase direction of rational slope.

\begin{figure}[h]
\begin{center}
\includegraphics[scale=0.5]{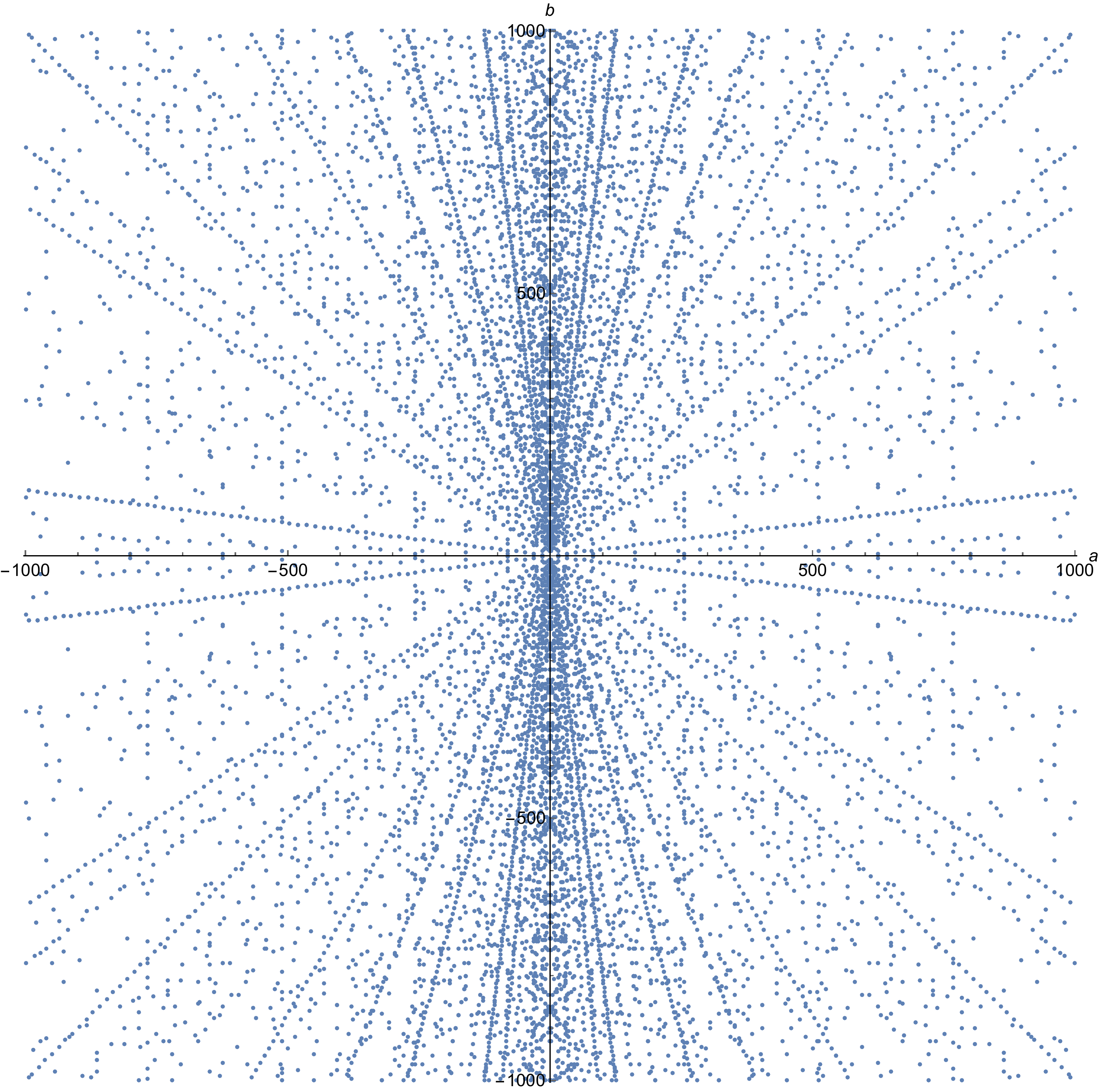}
\end{center}
\caption{The wavevector set $\Lambda$ on the domain $|a|,|b| \leq 1000$.}
\label{lambda}
\end{figure}

\begin{figure}[h]
\begin{center}
\includegraphics[scale=0.5]{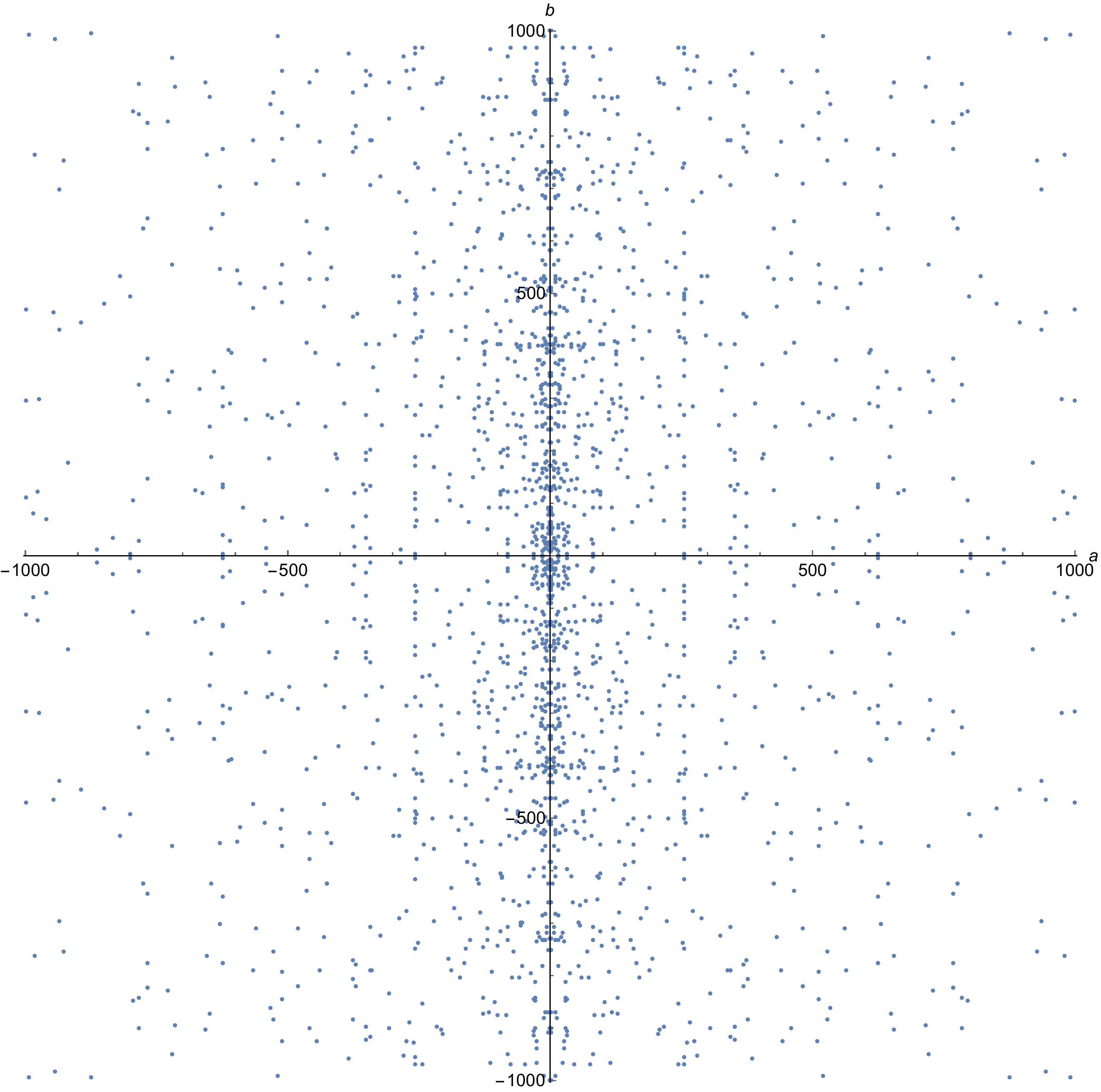}
\end{center}
\caption{The primitive wavevector set $\Lambda'$ on the domain $|a|,|b| \leq 1000$.}
\label{lambdaprime}
\end{figure}

\subsection{Conjectures and Further Work}

Kishimoto and Yoneda \cite{KY} remark that the wavenumber set appears highly anisotropic.  Specifically, this set becomes sparser faster as $a \to \infty$ than as $b \to \infty$.  Based on numerical observations, we suggest some stronger conjectures about the anisotropy.

Numerical evidence suggests that the horizontal lines contain only finitely many points in the wavenumber set.

\begin{conj}\label{conj1}
If $(a,b)$ is in the wavenumber set $\Lambda$ and $a,b > 0$, then $a<\sqrt{3}b^4$ unless $(a,b)=(800,4)$.  In particular, there are finitely many points on every horizontal line in Figures \ref{lambda} and \ref{lambdaprime}.  In other words, there are only finitely many Rossby waves with fixed meridional wavenumber that exhibit nonzonal resonances.
\end{conj}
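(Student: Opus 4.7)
The plan is to use Theorem \ref{paramthm} to convert the inequality $a < \sqrt{3}\, b^4$ into a Diophantine inequality in the projective parameters $(s, t, u)$, and then reduce the problem to counting lattice points near a specific algebraic curve. First, one reduces to primitive triads: any $(a, b) \in \Lambda$ arises from a primitive witness $(a', b', x', y')$ via $(a, b) = d\cdot(a', b')$ for some $d \geq 1$, and since $a/b^4 = (a'/(b')^4)/d^3$, proving the bound for $\Lambda'$ implies it for $\Lambda$.

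By Theorem \ref{paramthm}, each primitive point $(a, b, x, y) \in X$ corresponds to a unique $[s:t:u] = [a^2+b^2 : bx-ay : ax+by]$. Writing
\[
a \,=\, \lambda^{-1}\, s^3 t (s - 2u), \qquad b \,=\, \lambda^{-1}\, s\, g(s, t, u),
\]
with $g(s, t, u) := -s^2 u (s - 2u) + (t^2 + u^2)(t^2 - 2su + u^2)$ and $\lambda$ the positive integer clearing the common divisor of the parametric image, elimination of $\lambda$ yields the identity
\[
\frac{a}{b^4} \,=\, \frac{s^2\, t\, (s - 2u)}{b^3\, g(s, t, u)}.
\]
The conjectural bound $a < \sqrt{3}\, b^4$ is therefore equivalent to the polynomial inequality $s^2 t(s - 2u) < \sqrt{3}\, b^3\, g(s, t, u)$, which in inhomogeneous coordinates $\tau = t/s$, $v = u/s$ becomes
\[
\tau(1 - 2v) \,<\, \sqrt{3}\, b^3\, G(\tau, v), \qquad G(\tau, v) := -v(1 - 2v) + (\tau^2 + v^2)(\tau^2 - 2v + v^2).
\]
For any fixed $b \geq 1$, the right-hand side dominates outside a shrinking neighbourhood of the real locus $\{G = 0\}$, whose only branch through the origin is $v = \tau^4 + O(\tau^6)$. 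Consequently, potential failures of the inequality correspond to integer triples $(s, t, u)$ with $(t/s, u/s)$ close to this branch, equivalently to lattice points close to the affine curve $u s^3 = t^4$.

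The main obstacle is then to show that primitive integer triples $(s, t, u)$ lying in this neighbourhood, and additionally satisfying the integrality constraints that force $\lambda \in \Z$ and that make the corresponding $(a, b, x, y)$ primitive, form a finite set, apart from the sporadic triple yielding $(800, 4)$. On the curve $u s^3 = t^4$ itself, coprimality $\gcd(s, t, u) = 1$ forces $s = 1$, but controlling the nearby lattice points requires Baker-type bounds on linear forms, or an effective Siegel-type theorem for integer points on the fibers $C(a, b_0)$ of the elliptic fibration as $a \to \infty$ with $b_0$ fixed, exploiting the Weierstrass form of Theorem \ref{ellthm}. A complete proof would likely combine a blow-up analysis of $X$ near the singular fiber over $[a : b] = [\infty : 1]$ with a finite computer check for small $b$; the existence of the isolated exception $(800, 4)$ shows that no purely size-based argument can work.
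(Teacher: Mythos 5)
This statement is Conjecture~\ref{conj1}: the paper does not prove it and offers only numerical evidence, so there is no proof in the paper to compare against, and your proposal does not supply one either. The preliminary reductions you carry out are essentially correct bookkeeping: the identity $a/b^4 = s^2t(s-2u)/\bigl(b^3\, g(s,t,u)\bigr)$ follows from the parametrization of Theorem~\ref{paramthm}, and the reduction from $\Lambda$ to $\Lambda'$ via $a/b^4 = (a'/(b')^4)/d^3$ works (you should also note that no multiple of $(800,4)$ violates the bound, which holds because $800/(\sqrt{3}\cdot 4^4) \approx 1.8 < 8 \le d^3$). But the entire content of the conjecture is concentrated in the step you defer to the last paragraph: showing that only finitely many admissible integer triples $(s,t,u)$, subject to the integrality and primitivity constraints that pin down $\lambda$, lie in the shrinking neighbourhood of the branch $v=\tau^4+O(\tau^6)$. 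No argument is given for this, and you acknowledge as much.

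Moreover, the tools you gesture at would not close that gap. Siegel's theorem, even in an effective Baker-type form, gives finiteness of integral points on each \emph{fixed} fiber $C(a,b_0)$ --- the paper already obtains this elementarily from compactness of $C(a,b)(\R)$ --- whereas the conjecture is a statement about infinitely many fibers simultaneously: for fixed $b_0$ and all $a \ge \sqrt{3}b_0^4$ (save one exception), the fiber $C(a,b_0)$ must have \emph{no} nontrivial integral point at all. No known uniform-in-the-family version of Siegel's theorem yields such a conclusion, and counting lattice points within a shrinking neighbourhood of an algebraic curve is itself an open-ended problem absent a specific gap or equidistribution input. A concrete error along the way: your claim that $\gcd(s,t,u)=1$ forces $s=1$ on the curve $us^3=t^4$ is false, since $(s,t,u)=(16,8,1)$ is a coprime solution. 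In short, the proposal is a reasonable reformulation of the conjecture in the $(s,t,u)$ coordinates, but it contains no proof of the conjecture's actual content.
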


On the other hand, it's easy to produce infinitely many points on some vertical lines: For example, if $a=s^4$, then $(a,st^3) \in \Lambda$ for any $t \in \Z$.  We conjecture, based on numerical computations, that there are actually infinitely many points on any vertical line (except the $b$-axis) in Figures \ref{lambda} and \ref{lambdaprime}.

\begin{conj}\label{conj2}
For fixed nonzero $a \in \Z$, $\{b \in \Z : (a,b) \in \Lambda'\}$ is infinite.
\end{conj}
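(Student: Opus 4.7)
Plan: Fix $a_0 \in \Z \setminus \{0\}$. The goal is to exhibit infinitely many primitive integer solutions $(a_0, b, x, y) \in X^\circ$ with distinct values of $b$. Equivalently, one seeks a rational curve in the affine slice $\tilde X_{a_0} \subset \A^3_{(b,x,y)}$ cut out by (\ref{eqn2}) with $a = a_0$, given by a polynomial parametrization $(b(t), x(t), y(t))$ with $b(t)$ non-constant and specialization at $t \in \Z$ yielding primitive integer solutions with $a_0 x(t)(a_0 - x(t)) \neq 0$. Such curves are already known for $a_0 = s_0^4$ via the pure-cube family (\ref{u=0}) and for $a_0 = \pm 1$ via the family $(-1, t^3, t^4 - 1, t^3 + t)$---obtained by restricting the parametrization of Theorem \ref{paramthm} to the line $\{s = u\} \subset \Pj^2_{(s,t,u)}$ and dividing the resulting image by the common polynomial factor $t$, together with the symmetry $(a,b,x,y) \mapsto (-a, b, -x, y)$ of equation (\ref{eqn2}).

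The direct approach generalizes this construction: for each $a_0$, one searches for a rational curve $C \subset \Pj^2_{(s,t,u)}$ such that the image under $\phi$ from Theorem \ref{paramthm} satisfies $A|_C = a_0 \cdot f(\tau)$ with $B|_C, X|_C, Y|_C$ all divisible by $f(\tau)$ as polynomials in the parameter $\tau$ on $C$. After clearing the common factor $f$, the image is primitive with first coordinate exactly $a_0$ and with non-constant $b$-coordinate (a polynomial in $\tau$), producing an infinite family. Whether such a curve $C$ exists for every $a_0$ is an algebraic-geometric question about how $\phi$ transforms the divisor $\{a = a_0\} \subset \Pj^3$, and I would attack it inductively, building up from the $a_0 = \pm 1, s_0^4$ cases using the $S_3$-symmetry permuting the three wavevectors of a resonant triad.

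An alternative, more structural approach uses the elliptic fibration on $\tilde X_{a_0}$ over $\A^1_b$, whose generic fiber is the elliptic curve over $\Q(b)$ with Weierstrass form $W^2 = Z^3 + (a_0^2 - 2b^2) Z^2 + (a_0^2 + b^2)^2 Z$ from Theorem \ref{ellthm}. By Theorem \ref{rank} combined with the elementary direction of Silverman's specialization theorem (infinite order at every specialization implies infinite order generically), the section $Q_{a_0}(b) = (0, 2b)$ has infinite order in $C(a_0, \cdot)(\Q(b))$. Hence the iterates $nQ_{a_0}$ for $n \geq 2$ yield an infinite sequence of distinct non-degenerate rational sections $b \mapsto (x_n(b), y_n(b))$, computable via the group law on the Weierstrass model. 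Each integer $b$ at which such a section evaluates to an integer point (possibly after rescaling) gives a primitive integer solution with first coordinate $a_0$.

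The main obstacle is integrality. For the direct approach, the polynomial divisibility condition defining $C$ is restrictive, and producing $C$ case by case for every $a_0$ may require significant ingenuity. For the alternative approach, $nQ_{a_0}$ with $n \geq 2$ is generically a rational (not polynomial) function of $b$, so only finitely many $b \in \Z$ yield integer coordinates for each fixed $n$; meanwhile, Siegel's theorem bounds each fiber $C(a_0, b)$'s contribution across varying $n$ by a finite number. An unconditional proof of infinitude therefore requires a delicate height-theoretic counting argument on the elliptic surface that balances these two finiteness statements. I expect the eventual proof will combine both viewpoints: the explicit rational-curve construction will handle most $a_0$, and the elliptic-fibration analysis---together with bounds on integral sections in the spirit of Silverman's work on heights in families---will dispose of the exceptional cases.
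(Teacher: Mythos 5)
The statement you are addressing is Conjecture \ref{conj2}, which the paper does not prove: it is presented as an open conjecture supported only by numerical evidence (the text explicitly says ``We conjecture, based on numerical computations, \dots''). So there is no proof in the paper to compare against, and your submission, by its own account, is a research plan rather than a proof. That said, it contains one genuinely correct fragment worth isolating: restricting the parametrization of Theorem \ref{paramthm} to the line $s=u$ gives $[a:b:x:y]=[-s^4t:st^4:t(t^4-s^4):st^2(t^2+s^2)]$, and at $s=1$, after cancelling the common factor $t$, the family $(-1,t^3,t^4-1,t^3+t)$ does lie on $X$, is primitive (the first coordinate is a unit), and satisfies $ax(a-x)=t^4(t^4-1)\neq 0$ for $|t|\geq 2$. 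Together with the symmetry $(a,b,x,y)\mapsto(-a,-b,-x,-y)$ this actually settles the conjecture for $a=\pm 1$, just as the pure cube family (\ref{u=0}) settles it for $a$ a fourth power. You should state and verify that special case as a proposition rather than burying it in a plan.

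For general $a_0$ the gaps are real and you have not closed them. In the ``direct'' approach you give no reason that a rational curve $C\subset\Pj^2$ with the required divisibility property exists for, say, $a_0=2$ or $a_0=3$; the $S_3$-symmetry permutes the three wavevectors of a triad but does not obviously propagate the property ``the vertical line over $a_0$ contains infinitely many primitive points'' from one $a_0$ to another. In the fibration approach, the parenthetical ``possibly after rescaling'' is an error, not a technicality: if $nQ$ evaluates at an integer $b$ to a non-integral point $(x,y)$, clearing denominators replaces $(a_0,b,x,y)$ by $(\lambda a_0,\lambda b,\lambda x,\lambda y)$, whose first coordinate is no longer $a_0$. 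This is precisely why the nonvanishing rank of $C(a_0,b)$ (Theorem \ref{rank}) yields infinitely many points on each line of rational slope \emph{through the origin} of the $(x,y)$-lattice (as noted in section \ref{secdisc}) but says nothing directly about vertical lines $a=a_0$ in the $(a,b)$-lattice. Siegel's theorem, which you invoke, works against you here: it caps the number of integral points on each fixed fiber, so infinitude along a vertical line must come from infinitely many distinct fibers, and no counting argument in your proposal controls how often integrality occurs across fibers. As written, the proposal neither proves the conjecture nor reduces it to a precisely stated, plausibly tractable lemma for general $a_0$.
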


\noindent Define the growth functions
\begin{align}
F_1(N) &:= \left|\Lambda' \cap [-N,N]^2\right|\\% &\sim c_1 N\\
F_2(N) &:= \left|\Lambda' \cap (\R\times[-N,N])\right|\\% &\sim c_2 N\\
F_3(N) &:= \left|\{(a,b,x,y)\in X : \gcd(a,b,x,y) = 1, (a,b,a-x,b-x,x,y) \in [-N,N]^6\}\right|% &\sim c_3 N
\end{align}
Numerical evidence suggests that $F_1(N)$, $F_2(N)$, and $F_3(N)$ all grow linearly in $N$, to first order.  Thus, the corresponding quantities without the primitivity condition should grow like $\Theta(N\log N)$.  We hope to investigate the growth of these quantities in a future paper.

Finally, we make a remark about the distribution of ranks of these elliptic curves.  For $(a,b) \in [1,100]^2$, we found (using Magma) that 39\% of the $C(a,b)$ had rank 1, 46\% had rank 2, 13\% had rank 3, and 1\% had rank 4.  This looks similar to numerical data number theorists have obtained for other rank 1 families.  However, the widely believed Minimalist Conjecture predicts that, in the limit as $N \to \infty$, 50\% of the $C(a,b)$ in $[1,N]^2$ should have rank 1, and 50\% rank 2.  See \cite{Wei} for background about this problem and extensive numerical computations.

\subsection{Computational Issues}\label{seccompiss}

Bustamante and Hayat's classification \cite{BH} yields an algorithm for enumerating the set of nontrivial resonant triads.  They identifying this set with the zero-set of a diagonal quadratic form in four variables, and enumerate that zero-set using classical theorems on representations of integers as $n=x^2+dy^2$ for $d=1,3$.  Bustamante and Hayat pose the challenge of generating all or most primitive triads within a box of size $N$:
\begin{equation}
B_N := \{(a,b,x,y)\in X : \gcd(a,b,x,y) = 1, (a,b,a-x,b-x,x,y) \in [-N,N]^6\}
\end{equation}
For $N=5000$, Bustamante and Hayat's method produced $870 \times 12 = 435 \times 24$ triads in 20 minutes in Mathematica on an ``8-core desktop'' \cite{BH}.  Bustamante and Hayat compare this to a na\"{i}ve search, which would take about 15 years to complete, although Kartashova and Kartashova \cite{KK} remark that a smart search (confined to an open neighborhood of $X$) would be faster.\footnote{Kartashova and Kartashova describe, in section IV of \cite{KK}, a search algorithm enumerating $B_N$ in $O(N^3)$ time, compared to $O(N^4)$ for a na\"{i}ve search.}

For our method, we use the parametrization (\ref{param}) to enumerate points on $X$, with a slight modification.  We changed coordinates on $\Pj^2$ to make it easier to mod out by symmetries: We set $w=s-2u$ and use the coordinates $[s:t:w]$.  Then all nontrivial resonant triads are generated by $s>0$, $t>0$, $w>0$, $t^2 > \frac{1}{4}(3s-w)(s+w)$, and by the action of the group of order 24 generated by the symmetries
\begin{align*}
(a,b,x,y) &\mapsto (-a,-b,-x,-y),\\
(a,b,x,y) &\mapsto (x,y,a,b),\\
(a,b,x,y) &\mapsto (a,b,a-x,b-y),\\
(a,b,x,y) &\mapsto (a,-b,x,-y).
\end{align*}

Searching $0 < s,t,w \leq 200$ and applying symmetries took about 5 minutes of CPU time and found $443 \times 24$ triads in $B_{5000}$.  Searching $0 < s,t,w \leq 500$ and applying symmetries took about 80 minutes and found $463 \times 24$ triads in $B_{5000}$.
Computations were performed in Mathematica on a MacBook Pro laptop (2.9 GHz dual-core i7).

We have not established any upper bound on the search space necessary to output all nontrivial resonant triads up to a given wavenumber bound.  We hope to establish such a bound in a future paper.

%%%%%%%%%%%%%%%%%%%%

\section{The Surface of Rossby Triads}\label{surface}

\subsection{Exceptional Sets}\label{exceptsec}

Our object is to determine the rational points of $X^\circ$---that is, the primitive Rossby wave triads.  It's convenient to study the Zariski closure $X$, which will have a few extra points.  Before we do so, it's important to quantify the difference between these two sets.  We also need to understand the singular points of $X$, because the birational map from $X \to \Pj^2$ we will define in section \ref{paramsec} will be undefined at these points (and well-defined everywhere else).

\begin{prop}
$X \setminus X^\circ$ is a union of 3 real and 12 nonreal lines; specifically,
\begin{align}
\hspace*{-.05in}
X \setminus X^\circ = &
        \{a=b=0\} \cup \{x=y=0\} \cup \{a-x=b-y=0\} 
\cup \{a^2+b^2=x^2+y^2=0\} \nn \\ &\cup \{a^2+b^2=(a-x)^2+(b-y)^2=0\} \cup \{x^2+y^2=(a-x)^2+(b-y)^2=0\}.
\end{align}
\end{prop}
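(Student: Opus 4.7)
The plan is to identify $X \setminus X^\circ$ as the intersection of $X$ with the three ``denominator-vanishing'' loci from equation (\ref{eqn}), and then to decompose each piece. Define
\[
Z_1 = \{a^2+b^2 = 0\}, \quad Z_2 = \{x^2+y^2 = 0\}, \quad Z_3 = \{(a-x)^2+(b-y)^2 = 0\}.
\]
Outside $Z_1 \cup Z_2 \cup Z_3$, clearing denominators in (\ref{eqn}) is invertible, so (\ref{eqn}) and (\ref{eqn2}) cut out the same locus; thus $X^\circ = X \setminus (Z_1 \cup Z_2 \cup Z_3)$ and hence
\[
X \setminus X^\circ = (X \cap Z_1) \cup (X \cap Z_2) \cup (X \cap Z_3).
\]
The proof then reduces to computing each $X \cap Z_i$.

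Each of these is handled by substitution into (\ref{eqn2}). On $Z_1$ the left-hand side of (\ref{eqn2}) vanishes, reducing the surface equation to $a(x^2+y^2)(x^2+y^2-2ax-2by)=0$; using $a^2+b^2=0$, the last factor equals $(a-x)^2+(b-y)^2$, so
\[
X \cap Z_1 = \{a=b=0\} \cup \{a^2+b^2=x^2+y^2=0\} \cup \{a^2+b^2=(a-x)^2+(b-y)^2=0\}.
\]
The case $X \cap Z_2$ is handled identically using the symmetry $(a,b)\leftrightarrow(x,y)$ of (\ref{eqn2}). For $X \cap Z_3$, the relation $(a-x)^2+(b-y)^2 = 0$ gives $a^2+b^2-2ax-2by=-(x^2+y^2)$ and $x^2+y^2-2ax-2by = -(a^2+b^2)$, so (\ref{eqn2}) collapses to $(x-a)(a^2+b^2)(x^2+y^2)=0$, producing three analogous components. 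Taking the union of all three intersections yields the decomposition in the statement.

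Finally, I count lines. Each of $\{a=b=0\}$, $\{x=y=0\}$, $\{a-x=b-y=0\}$ is cut out in $\mathbb{P}^3$ by two real linear equations, giving $3$ real lines. Each of the three remaining ``double quadric'' components is cut out by a pair of quadratic forms of the shape $u^2+v^2=(u+iv)(u-iv)$, and so decomposes over $\mathbb{C}$ into the four pairwise intersections of pairs of complex hyperplanes, none of which is defined over $\R$. This yields $3 \cdot 4 = 12$ nonreal lines. The only mild obstacle is the algebraic reduction of (\ref{eqn2}) on $Z_3$; once that is in hand, every remaining step is the factoring of a cubic in a single variable at a time.
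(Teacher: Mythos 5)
Your proof is correct and follows essentially the same route as the paper: both identify $X \setminus X^\circ$ as the part of $X$ where some denominator of (\ref{eqn}) vanishes, substitute each vanishing condition into (\ref{eqn2}), and factor the resulting equation into the three components per case. Your write-up is somewhat more explicit than the paper's (which treats only the $a^2+b^2=0$ case in detail and declares the other two ``similar''), but there is no substantive difference in approach.
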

\begin{proof}
On $X \setminus X^\circ$, we have (\ref{eqn2}), and one of the denominators in (\ref{eqn}) vanishes.

If $a^2+b^2=0$, then $a(x^2+y^2)(x^2+y^2-2ax-2by)=0$.
\begin{itemize}
\item If $a=0$, then $b^2=0$, so $a=b=0$.
\item If $x^2+y^2=0$, then $a^2+b^2=x^2+y^2=0$; this is a union of the four nonreal lines $\{a+bi=x+yi=0\}\cup\{a+bi=x-yi=0\}\cup\{a-bi=x+yi=0\}\cup\{a-bi=x-yi=0\}$.
\item If $x^2+y^2-2ax-2by=0$, then $a^2+b^2=(a-x)^2+(b-y)^2=0$; this is a also union of four nonreal lines.
\end{itemize}

The two remaining cases, $x^2+y^2=0$ and $(a-x)^2+(b-y)^2=0$, are similar.
\end{proof}

The real points of $X \setminus X^\circ$ correspond to ``resonant triads'' that are actually just single Rossby waves.  If, for example, $x-a=y-b=0$, then the triad is $\{(a,b),(0,0),(a,b)\}$, and the conditions on the $A_i(t)$ (see equation (5) in \cite{LynchSupp}) reduce to $A_1'(t)=A_3'(t)=0$.  Thus, \ref{triad} becomes $\psi = \Re\left(C\psi_{a,b}\right)$, the real part of a single Rossby wave.

\begin{prop}
The singular locus $X^{\rm sing}$ of $X$ is the algebraic set where
\begin{align}
a^2+b^2=0 \mbox{ and } x^2+y^2=0 \mbox{ and } ay-bx=0,
\end{align}
that is, the two complex projective lines
\begin{align}
[a : ia : x : ix] \mbox{ and } [a : -ia : x : -ix].
\end{align}
\end{prop}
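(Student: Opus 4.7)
The plan is to apply the Jacobian criterion: a point of $X$ is singular iff the four partials $\partial_a F, \partial_b F, \partial_x F, \partial_y F$ simultaneously vanish. Set $A := a^2+b^2$, $B := x^2+y^2$, $C := ax+by$, $V := ay - bx$; by the Brahmagupta--Fibonacci identity $AB = C^2 + V^2$, the conditions $C = 0$ and $V = 0$ are equivalent on $\{A = B = 0\}$, so the locus in the statement is the same as $\{A = B = C = 0\}$, and solving these three quadrics in $\Pj^3$ yields precisely the two complex lines $[a:\pm ia:x:\pm ix]$.

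For the inclusion $\{A = B = C = 0\} \subseteq X^{\rm sing}$, I rewrite $F = xA(A - 2C) - aB(B - 2C) = xA^2 - aB^2 - 2CD$ where $D := xA - aB$ (which vanishes on $\{A = B = 0\}$). Each monomial of $F$ and of its four partials is divisible by at least one of $\{A, B, C, D\}$, so all five vanish on $\{A = B = C = 0\}$.

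For the converse, I proceed by case analysis on the vanishing of $A$ and $B$. When $A = B = 0$, the partials collapse to $-4axC$, $-4bxC$, $4axC$, $4ayC$; requiring all to vanish forces $C = 0$ (if $C \neq 0$, the resulting $ax = bx = ay = 0$ together with $A = B = 0$ yields a contradiction). When only $A$ vanishes, $F = -aB(B - 2C)$ factors, so either $a = 0$ (forcing $b = 0$ and $\partial_a F = -B^2 \neq 0$) or $B = 2C$, in which case a short check of $\partial_x F$ and $\partial_y F$ gives $(a, b) = (x, y)$, contradicting $A = 0 \neq B$; the case only $B$ vanishes is symmetric via the involution $(a,b,x,y) \leftrightarrow (x,y,a,b)$, which negates $F$. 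In the final case $A, B \neq 0$, any singular point of $X$ projects under the elliptic fibration of Theorem \ref{ellthm} to a point of the Weierstrass discriminant $\{[0:1], [\pm i:1], [\pm 2i:1]\}$; since $[\pm i:1]$ has $A = 0$, only the fibers $[0:1]$ and $[\pm 2i:1]$ remain.

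The main obstacle is this last case. On $[0:1]$, the equation $F|_{a=0, b=1} = x(1 - 2y)$ describes a union of two lines whose nodes are checked directly to give $\partial_x F \neq 0$; on $[\pm 2i:1]$ the plane curve $C(\pm 2i, 1)$ is an irreducible nodal curve whose node is located by pulling back the node $(Z, W) = (3, 0)$ of the Weierstrass form $W^2 = Z(Z-3)^2$ from Theorem \ref{ellthm}, after which one verifies $\partial_a F$ is nonzero. A more conceptual alternative, which avoids the fiber-by-fiber node-hunting, is to use the inverse parametrization $\phi^{-1}: [a:b:x:y] \mapsto [A : -V : C]$ from Theorem \ref{paramthm}: its indeterminacy locus on $X$ is $\{A = V = C = 0\}$, the union of the two claimed complex lines with the line $\{a = b = 0\}$. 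Since $\phi^{-1}$ is a birational inverse to a map from smooth $\Pj^2$, a standard argument shows $X$ is smooth off $\{A = V = C = 0\}$, reducing the converse to the residual line $\{a = b = 0\}$---handled immediately by the case $A = 0, B \neq 0$ above.
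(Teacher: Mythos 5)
Your reduction of the locus $\{A=B=0,\ V=0\}$ to $\{A=B=C=0\}$ via $AB=C^2+V^2$, the identity $f=xA^2-aB^2-2CD$, and the three cases $A=B=0$, ($A=0$, $B\neq 0$), ($B=0$, $A\neq 0$) are all correct and check out in detail; since the paper dispatches the whole proposition with ``a straightforward algebraic computation,'' your structured case analysis is a genuinely more explicit route. The difficulty is the remaining case $A\neq 0\neq B$, which is the real content of the proposition (it is what shows the physically relevant part of $X$ is smooth), and there both of your proposed arguments have gaps. For the first route: the hyperplane-section observation does show that a singular point of $X$ forces the fiber $C(a,b)$ to be singular at that point, but the implication ``$C(a,b)$ has a singular point with $B\neq 0$ $\Rightarrow$ the Weierstrass discriminant vanishes at $[a:b]$'' is asserted, not proved. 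The Weierstrass model of Proposition \ref{ellprop} is only \emph{birational} to $C(a,b)$, via a map whose exceptional locus meets the region $B\neq 0$, and every fiber is already singular at the two points at infinity, so vanishing of $\Delta$ does not obviously detect an extra finite singularity; you would need to show the normalization is an isomorphism near the point in question, or argue with delta invariants. Moreover, in the one check you do carry out, on the fiber over $[0:1]$: at the node $(x,y)=(0,1/2)$ of $x(1-2y)=0$ one has $\partial_x f=\partial_y f=0$ automatically (these are the fiber-direction derivatives at a singular point of the fiber), so ``$\partial_x F\neq 0$'' is false; that point is a smooth point of $X$ because $\partial_a f=3/16\neq 0$ there.

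The ``more conceptual alternative'' is also unsound as stated: a birational map to a smooth variety being \emph{defined} at a point does not imply the source is smooth there. (The projection $(x,y,z)\mapsto(x,z)$ from the quadric cone $xy=z^2$ to the plane is birational and defined at the cone point.) What you actually need is that the forward parametrization $\phi$ is defined at $\psi(p)$ and that $\phi\circ\psi=\id$ near $p$; then $d\phi_{\psi(p)}\circ d\psi_p=\id$ forces $\dim T_pX\leq 2$. So you must also compute the indeterminacy locus of $\phi$ on $\Pj^2$ and verify that $\psi$ carries no point of $X\setminus\{A=V=C=0\}$ into it (nor into any locus where $\phi$ fails to invert $\psi$). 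Either route can be completed, but as written the case $A,B\neq 0$ --- precisely the part the paper hides behind computer algebra --- is not established.
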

\begin{proof}
The surface $X = \{[a:b:x:y]\in\Pj^3 : f(a,b,x,y)=0\}$, where
\begin{equation}\label{f}
f(a,b,x,y) = x (a^2 + b^2) (a^2 + b^2 - 2 a x - 2 b y) - a (x^2 + y^2) (x^2 + y^2 - 2 a x - 2 b y).
\end{equation}
The singular locus is 
\begin{equation}
X^{\rm sing} = \left\{[a:b:x:y]\in\Pj^3 : f=\frac{\partial f}{\partial a}=\frac{\partial f}{\partial b}=\frac{\partial f}{\partial x}=\frac{\partial f}{\partial y}=0\right\}.
%X^{\rm sing} = \left\{[a:b:x:y]\in\Pj^3 : f(a,b,x,y)=\frac{\partial f}{\partial a}(a,b,x,y)=\frac{\partial f}{\partial b}(a,b,x,y)=\frac{\partial f}{\partial x}(a,b,x,y)=\frac{\partial f}{\partial y}(a,b,x,y)=0\right\}.
\end{equation}
A straightforward algebraic computation shows that this is the set described in the proposition.
\end{proof}

Note that $X^{\rm sing}$ is contained in $X \setminus X^\circ$; that is, $X^\circ$ is everywhere smooth.

\subsection{Rational Parametrization}\label{paramsec}

We will now prove Theorem \ref{paramthm}, which we restate here for convenience.

\begin{thm}%\label{paramthm}
The surface $X$ is birational to $\Pj^2$ over $\Q$.  A rational parametrization $\Pj^2 \to X$, in homogeneous coordinates $[s:t:u] \mapsto [a:b:x:y]$, is given by
\begin{align}\label{paramagain}
\left[\begin{array}{c}
a \vspace{-7pt} \\  .. \vspace{-1pt} \\  b \vspace{-7pt} \\ .. \vspace{-1pt} \\ x \vspace{-7pt} \\ .. \vspace{-1pt} \\ y
\end{array}\right]
&=
\left[\begin{array}{c}
s^3t(s-2u) \vspace{-7pt} \\  .. \vspace{-1pt} \\  s(-s^2u(s-2u)+(t^2+u^2)(t^2-2su+u^2)) \vspace{-7pt} \\ .. \vspace{-1pt} \\ t(t^2+u^2)(t^2-2su+u^2) \vspace{-7pt} \\ .. \vspace{-1pt} \\ (t^2+u^2)(-s^2(s-2u)+u(t^2-2su+u^2)) 
\end{array}\right].
\end{align}
Its rational inverse $X \to \Pj^2$, $[a:b:x:y] \mapsto [s:t:u]$ is given by
\begin{align}
\left[\begin{array}{c}
s \vspace{-7pt} \\  .. \vspace{-1pt} \\  t \vspace{-7pt} \\ .. \vspace{-1pt} \\ u
\end{array}\right]
&=
\left[\begin{array}{c}
a^2+b^2 \vspace{-7pt} \\  .. \vspace{-1pt} \\  bx-ay \vspace{-7pt} \\ .. \vspace{-1pt} \\ ax+by
\end{array}\right].
\end{align}
\end{thm}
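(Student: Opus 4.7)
The plan is to derive the parametrization from the complex-geometric structure of (\ref{eqn2}), so that the formulas emerge naturally, and then to verify the derivation with a pair of polynomial identity checks.

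First I would introduce complex affine coordinates $z_1 = a+bi$ and $z_2 = x+yi$, and set
\[ s := a^2+b^2 = z_1\bar z_1, \qquad u + it := z_1\bar z_2 = (ax+by) + i(bx-ay). \]
Taking real and imaginary parts recovers $u=ax+by$ and $t=bx-ay$, matching the claimed inverse map, and the identity $|z_1\bar z_2|^2 = |z_1|^2|z_2|^2$ gives $s(x^2+y^2) = u^2+t^2$. Together with the identities $a^2+b^2-2ax-2by = s-2u$ and $x^2+y^2-2ax-2by = (u^2+t^2)/s - 2u$, the defining equation (\ref{eqn2}) collapses, after clearing a factor of $s$, to the single relation
\[ x \cdot s^3(s-2u) \;=\; a \cdot (u^2+t^2)(u^2+t^2-2us). \]
This is the key reduction: a degree-$5$ hypersurface has been rewritten as a \emph{linear} relation between $a$ and $x$ with coefficients in $\Q[s,t,u]$.

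Second, I would recover $(x,y)$ from $(a,b,s,t,u)$ by inverting the $2\times 2$ system $ax+by=u$, $bx-ay=t$, which yields $x = (au+bt)/s$ and $y = (bu-at)/s$. Substituting $x=(au+bt)/s$ into the reduced equation cancels the $au$ terms and produces a linear relation between $a$ and $b$ with polynomial coefficients in $s,t,u$; the resulting ratio is
\[ [a:b] \;=\; \bigl[\, s^2 t(s-2u) \;:\; (t^2+u^2)(t^2-2su+u^2) - s^2 u(s-2u) \,\bigr]. \]
Clearing the common scaling by multiplying both coordinates by $s$ and back-substituting into the expressions for $x$ and $y$ yields the closed forms of (\ref{paramagain}), after a telescoping cancellation in $x$ and a factorization through $t^2+u^2$ in $y$.

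Third, I would confirm the derivation by two polynomial identity checks. For the composition $\Pj^2 \to X \to \Pj^2$, one substitutes the parametrization into $a^2+b^2$, $bx-ay$, $ax+by$ and verifies that these equal $sP$, $tP$, $uP$ for a common polynomial $P(s,t,u)$. For the composition $X \to \Pj^2 \to X$, one substitutes the parametrization into $f(a,b,x,y)$ of (\ref{f}) and verifies that the result vanishes identically in $\Q[s,t,u]$. Birationality then follows because the two rational maps are mutually inverse on a dense open, and hence $X$ is a rational surface over $\Q$. The main obstacle is purely computational: the identities involve homogeneous polynomials of degree up to roughly $10$ in three variables, impractical to expand by hand but routine in any computer algebra system. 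Conceptually, the entire theorem is driven by the complex-coordinate reduction of the first step; once the ``hidden linearity'' of the quintic equation is revealed, the parametrization and its inverse are essentially forced.
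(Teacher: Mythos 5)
Your proposal is correct, and although it is built on the very same change of variables $(s,t,u)=(a^2+b^2,\;bx-ay,\;ax+by)$ that the paper uses, the route to the formulas is genuinely different. The paper proceeds by verification: it writes down an explicit Cremona transformation $\psi\colon[a:b:x:y]\mapsto[a(a^2+b^2):a(bx-ay):a(ax+by):x(bx-ay)]$ of the ambient $\Pj^3$, together with its inverse $\phi$, and observes that the transform of the quintic factors into the indeterminacy locus times a polynomial that is \emph{linear} in the auxiliary fourth coordinate $v$; solving for $v$ and substituting into $\phi$ yields (\ref{paramagain}). You never introduce $v$. Instead the identity $u+it=(a+bi)\overline{(x+yi)}$, hence $t^2+u^2=(a^2+b^2)(x^2+y^2)$, collapses (\ref{eqn2}) to $x\,s^3(s-2u)=a\,(t^2+u^2)(t^2+u^2-2us)$, and elimination through the linear system $ax+by=u$, $bx-ay=t$ forces the formulas; I checked that the cancellation in $x=(au+bt)/s$ and the factorization of $y=(bu-at)/s$ through $t^2+u^2$ reproduce (\ref{paramagain}) exactly. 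Your derivation explains \emph{where} the formulas come from (multiplicativity of the complex norm), which the paper's proof does not, and your closing pair of identity checks, which also settles the dominance and irreducibility issues that the elimination alone leaves implicit, makes the argument rigorous; the paper's version instead gets, for free, that $\phi$ and $\psi$ are mutually inverse on all of $\Pj^3$, independently of $X$. One quibble: the verification that $f$ vanishes on the image of (\ref{paramagain}) is a polynomial identity of degree $25$, not roughly $10$, since each coordinate of the parametrization is a quintic in $s,t,u$ and $f$ has degree $5$ --- still entirely routine by machine.
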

\begin{proof}
Define the rational maps $\phi : \Pj^3 \to \Pj^3$ and $\psi : \Pj^3 \to \Pj^3$ by
\begin{align}\label{phi}
\phi([s:t:u:v]) &= [st^2 : s(sv-ut) : stv : -t^3-tu^2+suv], \\
\psi([a:b:x:y]) &= [a(a^2 + b^2) : a(b x - a y) : a(a x + b y) : x(b x - a y)].
\end{align}
The map $\phi$ is defined away from the four lines 
\begin{equation}
\{s=t=0\}\cup\{s=0,\, t=iu\}\cup\{s=0,\, t=-iu\}\cup\{t=v=0\}.
\end{equation}
The map $\psi$ is defined away from the four lines
\begin{equation}
\{a=b=0\}\cup\{a=x=0\}\cup\{a=ib,\, x=iy\}\cup\{a=-ib,\, x=-iy\}
\end{equation}

On a Zariski dense open set, $\phi \circ \psi = \id$ and $\psi \circ \phi = \id$; that is, $\phi$ and $\psi$ are rational inverses.
Moreover, if $[a:b:x:y]=\phi([s:t:u:v])$ and $[a:b:x:y] \in X$, then (\ref{eqn2}) becomes (after some algebra)
\begin{equation}
st(t^4 + t^2u^2 - 2stuv + s^2v^2)^2(-t^5 + 2st^3u - 2t^3u^2 + 2stu^3 - tu^4 + s^4v - 2s^3uv) = 0.
\end{equation}
But $st(t^4 + t^2u^2 - 2stuv + s^2v^2)=0$ is the locus in indeterminacy for $\psi \circ \phi$, and so we have
$-t^5 + 2st^3u - 2t^3u^2 + 2stu^3 - tu^4 + s^4v - 2s^3uv = 0$.
Solving for $v$ and plugging in to (\ref{phi}), we obtain (\ref{paramagain}).
\end{proof}

\section{The Elliptic Fibration}\label{fibration}

In this section, we show that the fiber $C(a,b)$ of $X \to \Pj^1$ is a genus 1 curve with two singular points, and we find a smooth model $\tilde{C}(a,b)$ in Weierstrass form.  Finally, we classify the torsion points on $\tilde{C}(a,b)$ for every $a,b \in \Z$.

\subsection{Normalization of the Fiber}\label{fibration1}

\begin{prop}\label{curvesing}
The curve $C(az,bz)$ in $\Pj^2$ (coordinates $[x:y:z]$) has singular points $[1:i:0]$ and $[1:-i:0]$.
\end{prop}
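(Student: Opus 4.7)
The plan is to verify the claim by a direct computation organized around one structural observation. First, I compute the projective defining equation of $C(az,bz)$ by homogenizing (\ref{eqn2}) with respect to the new coordinate $z$ (equivalently, by substituting $(a,b) \mapsto (az,bz)$ into (\ref{eqn2}) and dividing out a common factor of $z$). This produces the quartic
\begin{equation*}
G(x,y,z) = (a^2+b^2)^2 xz^3 - 2(a^2+b^2)(ax+by)xz^2 + 2a(ax+by)(x^2+y^2)z - a(x^2+y^2)^2,
\end{equation*}
whose zero locus in $\mathbb{P}^2$ is $C(az,bz)$. The essential structural observation is that every monomial of $G$ except $-a(x^2+y^2)^2$ is divisible by $z$. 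In particular, $G(x,y,0) = -a(x^2+y^2)^2 = -a(x-iy)^2(x+iy)^2$, so the curve meets the line at infinity $\{z=0\}$ only at $[1:i:0]$ and $[1:-i:0]$.

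Second, I verify that $G_x$, $G_y$, and $G_z$ all vanish at $[1:\pm i:0]$. The monomials of $G$ containing $z^k$ with $k \geq 2$ contribute only $z$-divisible terms to any partial derivative, which then vanish when $z=0$. The two remaining monomials to track are the $z$-linear term $2a(ax+by)(x^2+y^2)z$ and the $z$-free term $-a(x^2+y^2)^2$; differentiating and evaluating at $z=0$, each yields an expression still carrying a factor of $(x^2+y^2)$. For instance, $\partial_z$ of the $z$-linear term is $2a(ax+by)(x^2+y^2)$, and $\partial_x$ of $(x^2+y^2)^2$ is $4x(x^2+y^2)$; similarly for $\partial_y$. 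Since $x^2+y^2 = 0$ at $[1:\pm i:0]$, all three partials vanish there, confirming that both are singular points.

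The expected main obstacle is negligible: the proof reduces to elementary bookkeeping once the structural fact that $G + a(x^2+y^2)^2$ is divisible by $z$ has been recorded. If one wishes to show in addition that these are the \emph{only} singular points---an assertion implicit in the section's opening remark that $C(a,b)$ is a genus-one curve with exactly two singular points---one checks smoothness of the affine chart $\{z=1\}$ via the Jacobian criterion, which is a longer but routine computation with the polynomials $g$, $g_x$, $g_y$ obtained by setting $z=1$ in $G$, $G_x$, $G_y$.
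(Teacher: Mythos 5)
Your proof is correct and follows the same basic route as the paper: write down the homogeneous equation of $C(az,bz)$ and apply the Jacobian criterion at the two points. Two points of comparison are worth recording. First, you are actually more careful than the paper about the homogenization: the paper sets $g(x,y,z)=f(az,bz,x,y)$ and calls this the equation of the curve, but $g = z\cdot G$ with $G$ your quartic, so $\{g=0\}$ contains the line at infinity as a spurious component; stripping the factor of $z$ as you do is the right move, and your observation that $G \equiv -a(x^2+y^2)^2 \pmod{z}$ makes the vanishing of all three partials at $[1:\pm i:0]$ essentially immediate. Second, your argument as written establishes only that these two points \emph{are} singular and defers the claim that they are the \emph{only} singular points. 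That exclusivity is not decorative: the very next proposition invokes this one to conclude that $C(a,b)$ has no real singularities, so the affine Jacobian computation on the chart $\{z=1\}$ that you describe as routine does need to be carried out for the section's logic to go through (the paper's ``solving for $x,y,z$'' silently includes it). You should also note, as the paper does only later, that the statement requires $a\neq 0$: when $a=0$ the quartic $G$ degenerates into a product of lines and the singular locus is entirely different.
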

\begin{proof}
Let $g(x,y,z)=f(az,bz,x,y)$ where $f$ is the polynomial (\ref{f}) defining $X$, so $C(az,bz)$ has homogeneous equation $g(x,y,z)$.  Then,
\begin{equation}
C(az,bz)^{\rm sing} = \left\{[x:y:z] \in \Pj^2 : g=\frac{\partial g}{\partial x}=\frac{\partial g}{\partial y}=\frac{\partial g}{\partial z}\right\}.
\end{equation}
Solving for $x,y,z$, we find the two singular points $[1:i:0]$ and $[1:-i:0]$.
\end{proof}

\begin{prop}
If $a,b \in \R$, the real points of $C(a,b)$ form a smooth closed loop.
\end{prop}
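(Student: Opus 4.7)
The plan is to combine the smoothness information from Proposition \ref{curvesing} with an analysis of the real locus of the smooth Weierstrass model provided by Theorem \ref{ellthm}. Since Proposition \ref{curvesing} shows that the singular points of $C(a,b) \subset \Pj^2$ are $[1:\pm i:0]$, both non-real, every real point of $C(a,b)$ is a smooth point of the variety. The implicit function theorem then gives that $C(a,b)(\R)$ is a smoothly embedded $1$-manifold inside the compact space $\Pj^2(\R)$, and hence a finite disjoint union of smooth circles. What remains is to pin the number of components down to exactly one.

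For this I would pass to the smooth Weierstrass model
\begin{equation*}
\tilde{C}(a,b) : W^2 = Z\bigl(Z^2 + (a^2-2b^2)Z + (a^2+b^2)^2\bigr).
\end{equation*}
The discriminant of the quadratic factor above equals $(a^2-2b^2)^2 - 4(a^2+b^2)^2 = -3a^2(a^2+4b^2)$, which is strictly negative whenever $(a,b) \in \R^2$ with $a \neq 0$. So for such $(a,b)$ the cubic $p(Z)$ on the right has exactly one real root, namely $Z=0$, and two complex conjugate roots. The real affine locus $\{(Z,W) \in \R^2 : W^2 = p(Z)\}$ is then parametrized by $Z \in [0,\infty)$ with $W = \pm\sqrt{p(Z)}$, and compactifying by the unique real point at infinity $[0:1:0]$ glues the two unbounded branches together into a single topological circle. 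This is the standard classification for real Weierstrass cubics with exactly one real root.

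To finish, I would transfer this connectedness back along the normalization map $\nu : \tilde{C}(a,b) \to C(a,b)$, which is defined over $\R$ and is bijective away from the singular points of $C(a,b)$. Since those singular points are non-real, $\nu$ restricts to a continuous bijection between compact Hausdorff real loci, hence a homeomorphism, and so $C(a,b)(\R)$ is also a single smooth closed loop. The main obstacle I anticipate is the degenerate case $a=0$ with $b \neq 0$: here the Weierstrass cubic factors as $Z(Z-b^2)^2$, the model $\tilde{C}(0,b)$ becomes singular, and $C(0,b)$ itself splits into a union of lines, so the claim literally fails. I would therefore either strengthen the hypothesis to exclude the singular fiber $[a:b]=[0:1]$ (consistent with Theorem \ref{ellthm}), or check directly that the proposition is meant only for those fibers where the Weierstrass model is smooth.
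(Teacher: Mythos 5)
Your proof is correct but takes a genuinely different route from the paper's. The paper gets compactness by a direct inequality: setting $r=1+(b/a)^2$ and manipulating (\ref{eqn2}), it shows every real point satisfies $x^2+y^2\le r^2(a^2+b^2)$, so $C(a,b)(\R)$ lies in a closed disc; combined with the absence of real singularities (Proposition \ref{curvesing}), this exhibits the real locus as a compact smooth $1$-manifold, and the proof stops there. You instead obtain compactness from the projective closure --- which works, but you should say explicitly that the only points at infinity are $[1:\pm i:0]$ (so the affine real locus is already closed in $\Pj^2(\R)$); as a bonus, your route does not quietly break when $b=0$, where the paper's $r=1$ and its strict inequality $r>1$ fails. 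More importantly, you supply the step the paper leaves implicit: that the compact smooth real locus is a \emph{single} loop rather than several. Your argument --- the discriminant $-3a^2(a^2+4b^2)$ of the quadratic factor of the Weierstrass cubic is negative, so the cubic has exactly one real root and $\tilde{C}(a,b)(\R)$ is one circle, which transfers back along the normalization because that map is a homeomorphism on real loci when all singular points are non-real --- is complete and standard, at the cost of a forward reference to Proposition \ref{ellprop}, which is proved later but independently of this statement. Your caveat about $a=0$ is also well taken: the claim genuinely fails there ($C(0,b)$ degenerates to a union of lines), and the paper's own proof tacitly assumes $a\neq 0$ by dividing by $a$.
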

\begin{proof}
Let $(x,y) \in C(a,b)(\R)$, and let $r = 1+\left(\frac{b}{a}\right)^2$.  By (\ref{eqn2}),
\begin{equation}
x (a^2 + b^2) (a^2 + b^2 - 2 a x - 2 b y) = a (x^2 + y^2) (x^2 + y^2 - 2 a x - 2 b y).
\end{equation}
Suppose $x^2+y^2>r^2(a^2+b^2)$; then, $r^n(a^2+b^2) < x^2+y^2 \leq r^{n+1}(a^2+b^2)$ for some $n \geq 2$.  Thus,
\begin{align}
|x| (a^2 + b^2) (a^2 + b^2 - 2 a x - 2 b y) &> r^n |a| (a^2 + b^2) (a^2 + b^2 - 2 a x - 2 b y) \nn \\
|x| &> r^n |a|.
\end{align}
Thus,
\begin{equation}
r^{2n} a^2 < x^2 \leq x^2+y^2 \leq r^{n+1}(a^2+b^2) = r^{n+2} a^2,
\end{equation}
so $r^{n-2} < 1$.  This is impossible because $r > 1$ and $n \geq 2$.  Thus,
\begin{equation}
x^2+y^2 \leq r^2(a^2+b^2).
\end{equation}
That is, $C(a,b)(\R)$ is contained in the closed disc of radius $r \sqrt{a^2+b^2}$.

Finally, note that $C(a,b)$ has no real singularities, by Proposition \ref{curvesing}.
\end{proof}

\begin{cor}
If $a,b \in \Z$, $C(a,b)(\Z)$ is a finite set.
\end{cor}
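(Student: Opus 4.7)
The plan is to deduce the corollary as an immediate consequence of the preceding proposition, which shows that $C(a,b)(\R)$ lies inside the closed disc of radius $r\sqrt{a^2+b^2}$ with $r = 1+(b/a)^2$. Since a bounded subset of $\R^2$ contains only finitely many integer points, finiteness of $C(a,b)(\Z)$ follows at once.

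Concretely, I would first invoke the previous proposition to conclude that $C(a,b)(\R)$ is a smooth closed loop and, in particular, a compact subset of $\R^2$. Second, I would observe the chain of inclusions
\begin{equation*}
C(a,b)(\Z) \subset C(a,b)(\R) \cap \Z^2 \subset \bigl\{(x,y) \in \Z^2 : x^2+y^2 \leq r^2(a^2+b^2)\bigr\},
\end{equation*}
and note that the right-hand set is finite since it is the intersection of a bounded region with the lattice $\Z^2$. This immediately yields the corollary.

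The only thing to watch is the hypothesis $a \neq 0$ that is implicit in the previous proposition (through the definition of $r$). If one wishes to include $a = 0$ in the statement, I would handle it by direct inspection: for $a=0$ and $b \neq 0$, equation (\ref{eqn2}) degenerates to $x\, b^2 (b^2 - 2by) = 0$, whose integer locus consists of the two lines $x=0$ and $2y = b$ and is infinite, so the corollary should be read with $a \neq 0$ (which is the physically relevant case, compare the $ax(a-x) \neq 0$ condition imposed on $\Lambda$ and $\Lambda'$). Given this, there is no real obstacle; the corollary is essentially a one-line deduction from the boundedness statement of the preceding proposition.
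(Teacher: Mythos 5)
Your proof is correct and is essentially the paper's own argument: the preceding proposition bounds $C(a,b)(\R)$ inside a disc, and a bounded subset of the lattice $\Z^2$ is finite. Your side remark about the implicit hypothesis $a \neq 0$ is well taken --- for $a=0$, $b\neq 0$ the fiber degenerates to $x(b-2y)=0$ and has infinitely many integer points --- but this does not change the fact that your route matches the paper's.
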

\begin{proof}
$C(a,b)(\Z) = C(a,b)(\R) \cap \Z^2$ is a compact discrete subset of $\R^2$, hence finite.
\end{proof}

\begin{figure}[h]\label{peanut}
\begin{center}
\includegraphics[scale=.7]{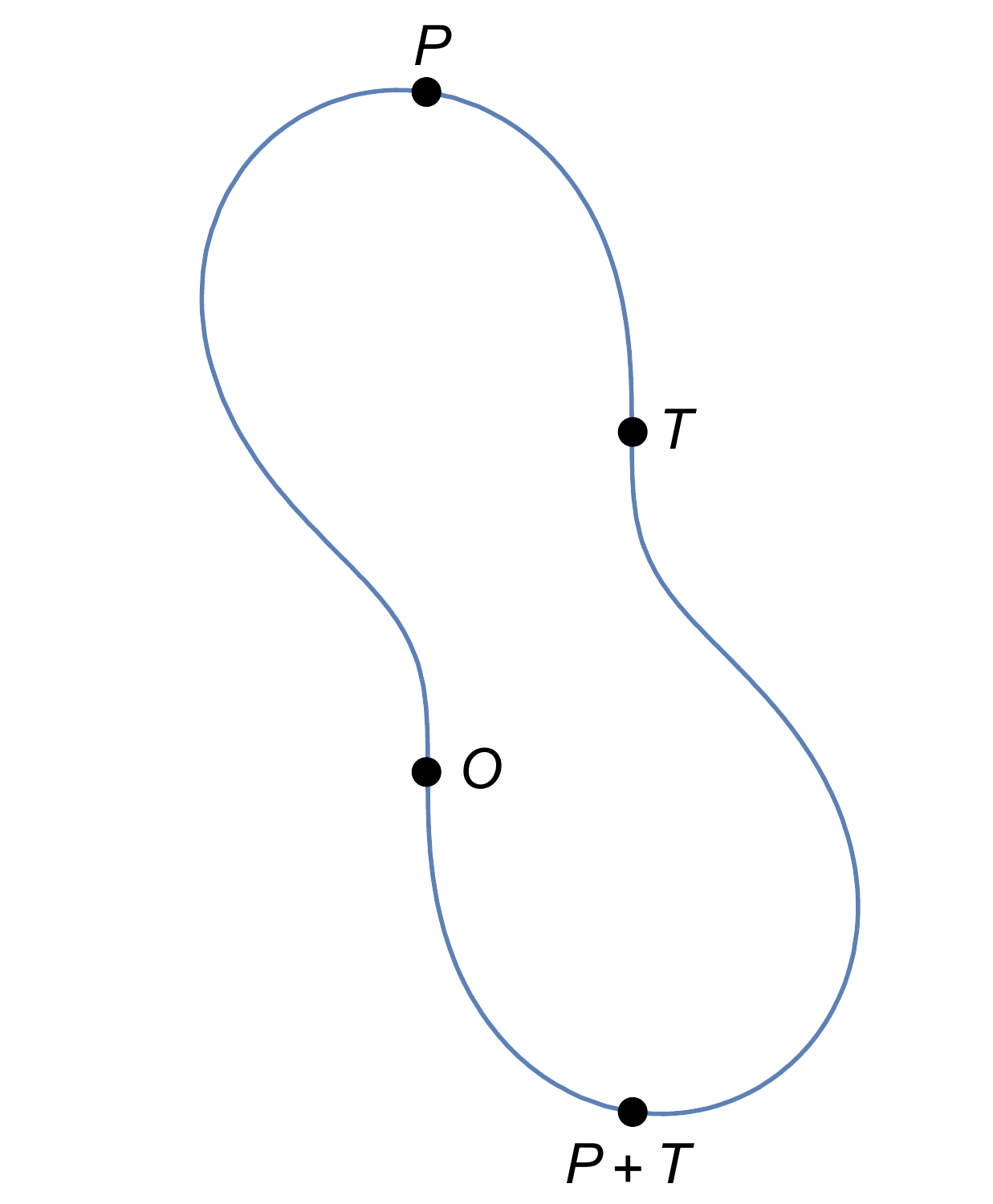}
\end{center}
\caption{The real points of the curve $C(a,b)$.}
\end{figure}

We normalize the $C(a,b)$ to obtain a smooth model, and convert to Weierstrass form.

\begin{prop}\label{ellprop}
The normalization of $C(a,b)$ is an elliptic curve (except when $a=0$) $\tilde{C}(a,b)$ with affine equation
\begin{align}%\label{niceform}
W^2=Z^3+(a^2-2b^2)Z^2+(a^2+b^2)^2Z.
\end{align}
In the standard Weierstrass form, its equation is
\begin{align}\label{weierform}
Y^2=X^3+\frac{1}{3}(2a^4 + 10a^2b^2 - b^4)X-\frac{1}{27}(a^2 - 2b^2)(7a^4 + 26a^2b^2 + b^4).
\end{align}
\end{prop}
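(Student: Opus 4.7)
The plan has three stages: (i) show $\tilde C(a,b)$ has geometric genus one, (ii) construct a birational model in the first form $W^2 = Z^3 + (a^2-2b^2)Z^2 + (a^2+b^2)^2 Z$, and (iii) pass to the standard form \eqref{weierform} by a routine substitution.

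For stage (i), the affine equation \eqref{eqn2} has degree $4$ in $(x,y)$ (from the top-degree contribution $-a(x^2+y^2)^2$), so the projective closure $C(a,b) \subset \mathbb{P}^2$ is a plane quartic. By Proposition \ref{curvesing} its singular locus is $\{[1:\pm i:0]\}$. I would verify that each is an ordinary double point by a local computation of the tangent cone: near $[1:i:0]$, in affine coordinates $u = z/x$ and $v = y/x - i$, the defining polynomial has leading quadratic part a nondegenerate binary form in $(u,v)$ whenever $a \neq 0$, and the other node follows by complex conjugation. The genus formula for a plane curve of degree $d$ with only nodal singularities then gives $g = \binom{d-1}{2} - \sum_P \delta_P = \binom{3}{2} - 2 = 1$. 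Combined with the smooth rational point $(x,y) = (a,b) \in C(a,b)$ (direct substitution in \eqref{eqn2}), this shows $\tilde C(a,b)$ is an elliptic curve over $\mathbb{Q}(a,b)$.

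For stage (ii), I would project $C(a,b)$ from the node $[1:i:0]$ using the pencil of lines $y = ix + c$. Each line meets $C(a,b)$ at the node with multiplicity $2$, so the residual intersection defines a degree-$2$ rational map $C(a,b) \to \mathbb{P}^1_c$; together with genus one, this yields a hyperelliptic model $v^2 = P(c)$ with $P$ a quartic having four distinct roots over $\overline{\mathbb{Q}}$. Sending the branch point corresponding to $(x,y) = (a,b)$ to $Z = 0$ and rescaling converts this into $W^2 = Z \cdot Q(Z)$ with $Q$ a monic quadratic in $Z$. Using the second obvious rational point $(x,y) = (0, 2b) \in C(a,b)$ (both sides of \eqref{eqn2} vanish there) to fix the remaining normalization pins down $Q(Z) = Z^2 + (a^2-2b^2)Z + (a^2+b^2)^2$, producing the first displayed Weierstrass equation. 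Equivalently, once explicit rational formulas $Z(x,y)$, $W(x,y)$ are in hand, the Weierstrass equation becomes a polynomial identity modulo \eqref{eqn2}.

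For stage (iii), the substitution $Z = X - \tfrac{1}{3}(a^2 - 2b^2)$, $W = Y$ eliminates the quadratic term, and expanding and collecting coefficients yields \eqref{weierform}. The case $a = 0$ is excepted because the first Weierstrass equation degenerates to $W^2 = Z(Z-b^2)^2$, a nodal cubic, consistent with the degeneration of \eqref{eqn2} (which factors as a union of three lines when $a = 0$).

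The main obstacle is stage (ii): although the projection from a node is conceptually standard, tracking the algebra to arrive at the precise coefficients $(a^2-2b^2)$ and $(a^2+b^2)^2$ requires a careful chain of coordinate changes (normalizing one rational branch point to $0$ and further adjusting so that the second rational point forces the stated quadratic factor). Producing clean closed-form expressions for $Z(x,y)$ and $W(x,y)$, and verifying them against \eqref{eqn2}, is the computationally heavy part of the argument.
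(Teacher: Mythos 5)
Your stage (i) is sound and is actually a different (and arguably more conceptual) route to genus one than the paper's: the paper never invokes the nodal-quartic genus formula, but instead writes down an explicit rational map $(Z,W)=\phi(x,y)$, checks that it satisfies (\ref{niceform}) identically modulo (\ref{eqn2}), and reads off genus one from the nonvanishing of the discriminant $-48a^2(a^2+b^2)^4(a^2+4b^2)$ of the cubic; your stage (iii) is exactly the paper's substitution. The genuine gap is in stage (ii). First, $(x,y)=(a,b)$ is \emph{not} a branch point of the projection from $[1:i:0]$: the gradient of the defining quartic at $(a,b)$ is $\bigl(-(a^2+b^2)^2,\,0\bigr)$, so the tangent line there is $x=a$, which does not pass through $[1:i:0]$; the line joining that node to $(a,b)$ therefore meets the curve transversally at $(a,b)$, and your normalization step ``send the branch point corresponding to $(a,b)$ to $Z=0$'' has no referent. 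Second, $[1:i:0]$ is not a rational point, so the pencil $y=ix+c$ and the resulting hyperelliptic model $v^2=P(c)$ are only defined over $\Q(i)$ (complex conjugation interchanges this projection with the projection from the conjugate node); the procedure as described does not directly produce a $\Q(a,b)$-rational equation at all, let alone the one asserted.

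The degree-two map that actually underlies (\ref{niceform}) is the Galois-stable one whose fibers are cut out by the pencil of conics $\lambda(a-x)+\mu\bigl((a^2+b^2)x-a(x^2+y^2)\bigr)=0$; these pass through \emph{both} nodes and through the two base points $(a,\pm b)$, and the member $x=a$ meets the quartic at $(a,b)$ with multiplicity three, so $(a,b)$ is a ramification point of \emph{this} cover (it becomes the $2$-torsion point $(Z,W)=(0,0)$). Your closing remark---that once explicit formulas $Z(x,y),W(x,y)$ are in hand the proposition reduces to a polynomial identity modulo (\ref{eqn2})---is precisely the paper's proof, but your proposed derivation would not lead you to those formulas, so as written the argument does not establish the specific equation claimed. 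Repairing it means replacing projection from one node by the conic pencil above (or simply exhibiting $\phi$ and verifying the identity, as the paper does).
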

\begin{proof}
Define the rational function $\phi$ by
\begin{align}\hspace{-.04in}
\phi(x,y) = \left(\frac{-(a^2 + b^2)^2 (a-x)}{(a^2 + b^2) x - a(x^2 + y^2)},\frac{a (a^2 + b^2)^2 (a-x) (a^2 + b^2 - a x - b y + x^2 + y^2)}{(b x - a y) ((a^2 + b^2) x - a(x^2 + y^2))}
\right).
\end{align}
It is a straightforward computation to check that $(Z,W)=\phi(x,y)$ satisfies (\ref{niceform}) for generic $(x,y) \in C(a,b)$, and that $\phi$ is invertible on a Zariski open set whose complement does not contain $X$.  The discriminant of the RHS of (\ref{niceform}) is computed to be
\begin{equation}
\Delta = -48a^2(a^2+b^2)^4(a^2+4b^2),
\end{equation}
so (\ref{niceform}) defines a nonsingular genus one curve (unless $a=0$). 

The standard Weierstrass form comes from the substitution $(X,Y)=(Z+\fot (a^2-2b^2),W)$.
\end{proof}

We are now ready to prove Theorem \ref{ellthm}, which we restate here for convenience.
\begin{thm}
$X$ is a (singular) rational elliptic surface.  The map $X \to \Pj^1$ given in homogeneous coordinates by $[x:y:a:b] \mapsto [a:b]$ (and having fibers $C(a,b)$), along with the choice of identity section $[a:b] \mapsto [0:0:a:b]$, is an elliptic fibration.  The singular fibers over $\ol{\Q}$ occur at $[a:b] = [0:1], [\pm i : 1], [\pm 2i : 1]$.
\end{thm}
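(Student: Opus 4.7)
The plan is to combine Theorem \ref{paramthm} with Proposition \ref{ellprop} and to add a short discriminant computation. Rationality of $X$ is immediate from Theorem \ref{paramthm}, which provides a birational map $\Pj^2 \dashrightarrow X$ defined over $\Q$.

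First I would verify that the projection $\pi : X \to \Pj^1$ sending $[a:b:x:y] \mapsto [a:b]$ is a morphism on $X \setminus \{a = b = 0\}$ and that its scheme-theoretic fibers are by definition the curves $C(a,b)$. By Proposition \ref{ellprop}, the normalization $\tilde{C}(a,b)$ has geometric genus $1$ for generic $(a,b)$, so $\pi$ is a genus-one fibration. Next I would check that $\sigma([a:b]) := [0:0:a:b]$ defines a section: substituting $x = y = 0$ into (\ref{eqn2}) gives $0 = 0$, so $\sigma([a:b]) \in X$, and clearly $\pi \circ \sigma = \id$. Under the rational map $\phi$ from the proof of Proposition \ref{ellprop}, the image of $\sigma$ corresponds to the point at infinity of the Weierstrass model, so $\sigma$ serves as the zero section. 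Combined, $(\pi,\sigma)$ is an elliptic fibration, and together with rationality this makes $X$ a (singular) rational elliptic surface.

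For the location of singular fibers, I would appeal to the discriminant of the Weierstrass equation (\ref{niceform}) already computed in the proof of Proposition \ref{ellprop}:
\[
\Delta(a,b) = -48\, a^2 \, (a^2 + b^2)^4 \, (a^2 + 4b^2).
\]
This is a form of degree $12$ in $(a,b)$, and over $\ol{\Q}$ it vanishes precisely when $a = 0$, $a^2 + b^2 = 0$, or $a^2 + 4b^2 = 0$, i.e.\ at $[a:b] \in \{[0:1],\, [\pm i:1],\, [\pm 2i:1]\}$. Away from these five points the cubic in (\ref{niceform}) is nondegenerate, so $\tilde{C}(a,b)$ is a smooth elliptic curve and the fiber of the fibration is nonsingular; at these five points the cubic acquires a multiple root and the fiber degenerates.

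The main subtlety is to disentangle several coexisting notions of ``singular.'' The surface $X$ is itself singular along $X^{\rm sing}$, and each projective fiber $C(a,b) \subset \Pj^2$ contains the two fixed singular points $[1:\pm i:0]$ by Proposition \ref{curvesing}, which are resolved uniformly by the normalization $\tilde C(a,b)$ and therefore contribute nothing to the variation of fiber type over $[a:b]$. What the theorem calls a singular fiber is a degeneration of the smooth model $\tilde C(a,b)$, which is exactly what the vanishing of $\Delta(a,b)$ detects. A fully rigorous treatment would pass to a relatively minimal smooth model of the fibration (resolving $X$ and contracting any resulting exceptional curves inside fibers); for the present statement it suffices to identify the locus over $\ol{\Q}$ where $\tilde C(a,b)$ fails to be a smooth elliptic curve, and this is precisely the zero locus of $\Delta(a,b)$ listed above.
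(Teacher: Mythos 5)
Your proposal is correct and follows essentially the same route as the paper: the paper's proof simply cites Proposition \ref{ellprop} and observes that the discriminant $\Delta = -48a^2(a^2+b^2)^4(a^2+4b^2)$ vanishes exactly at $[0:1]$, $[\pm i:1]$, $[\pm 2i:1]$. Your additional verifications (that $[0:0:a:b]$ lies on $X$ and maps to the point at infinity of the Weierstrass model, and the clarification of which notion of ``singular fiber'' is meant, given that every $C(a,b)\subset\Pj^2$ has the fixed singular points $[1:\pm i:0]$) are consistent with what the paper leaves implicit.
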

\begin{proof}
This result follows from Proposition \ref{ellprop}.  All that needs to be checked is that the discriminant of $\tilde{C}(a,b)$ is
\begin{equation}
\Delta = -48a^2(a^2+b^2)^4(a^2+4b^2),
\end{equation}
which vanishes precisely at $[a:b] = [0:1], [\pm i : 1], [\pm 2i : 1]$.
\end{proof}

There are four ``trivial'' integer points on $C(a,b)$, giving rise to physically uninteresting solutions to the CHME. The point $O=(0,0)$ is taken to the identity of $\tilde{C}(a,b)$, while $T=(a,b)$ has order 2.  Physically, these torsion points are single Rossby waves (considered as ``triads'' along with two waves of amplitude zero).  The points $P=(0,2b)$ has infinite order, and $(a,-b)=P+T$.  Physically, $P$ and $P+T$ are zonal resonances.

The involution $Q \mapsto Q+P$ is represented by rotation about the center in Figure \ref{peanut}.  However, the negation involution $Q \mapsto -Q$ (equivalently, $(Z,W) \mapsto (Z,-W)$ in Equation \ref{niceform}) looks much more complicated in the $(x,y)$-coordinates, and takes $P$ and $P+T$ to nontrivial points on some higher-level $C(\lambda a, \lambda b)$.

\subsection{Torsion Points on the Fiber}

We will now rule out additional torsion points on $\tilde{C}(a,b)(\Q)$.  This is a rather involved computation and will occupy the next several pages.

\begin{thm}\label{torsion}
For every $a,b\in\Z$ with $a \neq 0$, the torsion of $\tilde{C}(a,b)(\Q)$ is $\Z/2\Z$, generated by $T$.
\end{thm}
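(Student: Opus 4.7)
The plan is to combine Mazur's classification of rational torsion with a 2-isogeny descent.

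First, I would reduce to $\gcd(a,b)=1$ via the isomorphism $\tilde{C}(a,b) \cong \tilde{C}(a/d,b/d)$ induced by $(Z,W) \mapsto (d^2 Z, d^3 W)$. The non-identity 2-torsion points of $\tilde{C}(a,b)$ are the roots of $Z^2 + (a^2-2b^2)Z + (a^2+b^2)^2 = 0$, whose discriminant $-3a^2(a^2+4b^2)$ is negative; hence $\tilde{C}(a,b)[2](\Q) = \{O,T\}$. Since the defining cubic has only one real root, $\tilde{C}(a,b)(\R)$ has a single connected component, so $\tilde{C}(a,b)(\R)_{\mathrm{tors}} \cong \R/\Z$ and $\tilde{C}(a,b)(\Q)_{\mathrm{tors}}$ is cyclic. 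Mazur's theorem then restricts this torsion to $\Z/2\Z$, $\Z/4\Z$, $\Z/6\Z$, $\Z/8\Z$, $\Z/10\Z$, or $\Z/12\Z$.

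Next, I rule out $\Z/4\Z$ (and hence $\Z/8\Z, \Z/12\Z$) via the duplication formula: a point $Q=(Z,W)$ with $2Q=T$ satisfies $Z^2=(a^2+b^2)^2$. For $Z=a^2+b^2$, $W^2 = 3a^2(a^2+b^2)^2$ is not a rational square; for $Z=-(a^2+b^2)$, $W^2 = -(a^2+b^2)^2(a^2+4b^2) < 0$. So $\tilde{C}(a,b)$ has no rational 4-torsion.

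For the remaining possibilities $\Z/6\Z$ and $\Z/10\Z$, I pass to the 2-isogenous curve $E': y^2 = x(x-3a^2)(x+a^2+4b^2)$. The isogeny $\phi: \tilde{C}(a,b) \to E'$ with $\ker\phi = \{O,T\}$ is injective on odd-order torsion, so the odd torsion of $\tilde{C}(a,b)(\Q)$ embeds in $E'(\Q)_{\mathrm{tors}}$. The curve $E'$ has full $(\Z/2\Z)^2$ rational 2-torsion, and the classical 2-descent criterion shows that no 2-torsion point of $E'$ is a double: for each of the three 2-torsion points, one of the required differences $e_i - e_j$ fails to be a rational square (either because it is negative, or because it equals a non-square integer like $3$ times a square). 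Thus $E'$ has no rational 4-torsion, and by Mazur $E'(\Q)_{\mathrm{tors}} \in \{(\Z/2\Z)^2,\, \Z/2\Z \times \Z/6\Z\}$. The crux is to rule out the latter, i.e., to show that $\psi_3^{E'}(x) = 3x^4 + 4(4b^2-2a^2)x^3 - 18a^2(a^2+4b^2)x^2 - 9a^4(a^2+4b^2)^2$ has no rational root. Viewing $\psi_3^{E'}$ as a quadratic in $c = a^2+4b^2$, its discriminant factors as $16\,x^3(x-3a^2)^3$, so any rational root requires $x(x-3a^2)$ to be a rational square. This genus-zero condition admits the parametrization $x = 3a^2/(1-t^2)$ with $t \in \Q$; substituting back and imposing $a^2+4b^2 = c$ reduces the problem to finding rational points on an auxiliary hyperelliptic curve of genus $2$, to which I would apply the Chabauty-Coleman method.

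The main obstacle is this Chabauty-Coleman analysis: one must locate a prime $p$ at which the Mordell-Weil rank of the Jacobian of the auxiliary curve is strictly less than $2$, then execute explicit $p$-adic integrations to verify that the only rational points on the curve correspond to degenerate values of $(t,b/a)$ that do not produce an actual 3-torsion point on $E'$. Once $E'(\Q)_{\mathrm{tors}} = (\Z/2\Z)^2$ is established, the injection $\tilde{C}(a,b)(\Q)_{\mathrm{tors}}/\{O,T\} \hookrightarrow E'(\Q)_{\mathrm{tors}}$, together with cyclicity and the absence of rational 4-torsion, forces $\tilde{C}(a,b)(\Q)_{\mathrm{tors}} = \Z/2\Z$.
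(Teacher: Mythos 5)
Most of your skeleton is sound, and in places cleaner than the paper's: the negativity of the discriminant $-3a^2(a^2+4b^2)$ correctly pins down the rational $2$-torsion; the connectedness of $\tilde{C}(a,b)(\R)$ forcing cyclic torsion is a nice shortcut; the duplication-formula computation (giving $W^2=3a^2(a^2+b^2)^2$ or $W^2=-(a^2+b^2)^2(a^2+4b^2)$, neither a rational square) correctly kills $4$-torsion; and passing to the $2$-isogenous curve $E'\colon y^2=x(x-3a^2)(x+a^2+4b^2)$, which has full rational $2$-torsion, disposes of $5$-torsion essentially for free via Mazur, since no admissible group containing $(\Z/2\Z)^2$ has an element of order $5$. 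That is genuinely slicker than the paper's treatment of $\psi_5$, which requires identifying a genus-one quotient and computing its Mordell--Weil group in Magma. Your discriminant identity $16x^3(x-3a^2)^3$ for $\psi_3^{E'}$ viewed as a quadratic in $c=a^2+4b^2$ also checks out.

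The genuine gap is the $3$-torsion, which is precisely where the paper spends almost all of its effort (Lemma \ref{hyperelllem}, several pages). You reduce to ``rational points on an auxiliary hyperelliptic curve of genus $2$, to which I would apply the Chabauty--Coleman method,'' but you never write down that curve, verify its genus, compute the rank of its Jacobian, or check that the rank is strictly less than the genus --- the hypothesis without which Chabauty--Coleman says nothing. (A side misstatement: the rank of $J(\Q)$ is not something one ``locates a prime $p$ at which'' it is small; the rank condition is global, and $p$ is a separately chosen prime of good reduction.) The paper's experience with its own auxiliary curve $y^2=2187x^5-162x^3-40x^2-x$ shows this step is not routine: Coleman's bound there gives at most $6$ rational points while only $2$ exist, so a supplementary descent through an auxiliary elliptic curve is needed to eliminate two residue classes modulo $7$. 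There is no guarantee your different, uncomputed auxiliary curve behaves any better. As written, your argument establishes every case except $\Z/6\Z$ --- which is the theorem's hardest point and the one the paper's proof is really about.
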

\begin{proof}
By a famous theorem of Barry Mazur \cite{maz1, maz2}, the torsion group is one of the cyclic groups $C_m$ for $1 \leq m \leq 10$ or $m=12$, or $C_2 \times C_{2n}$ for $1 \leq n \leq 4$.  There can be no $7$-torsion because there is $2$-torsion.  So, it is sufficient to rule out additional $2$-torsion, $3$-torsion, $4$-torsion, and $5$-torsion.  This will be done by the method of division polynomials.  The $3$-torsion is by far the most difficult to rule out, and the hard work is carried out in Lemma \ref{hyperelllem}.

Let $\psi_n$ denote the $n^{\rm th}$ \textit{division polynomial} for an elliptic curve $y^2=x^3+Ax+B$ in Weierstrass form.  For odd $n$, $\psi_n$ is a polynomial of $A$, $B$, and $x$; for even $n$, it is $y$ times a polynomial of $A$, $B$, and $x$.  The solutions of $\psi_n(x,y) = 0$ give the locations of the $n$-torsion points of the elliptic curve.  The \textit{primitive division polynomial} $\hat{\psi}_n = \prod_{d|n} \psi_d^{\mu(n/d)}$ is a polynomial of $A$, $B$, and $x$ (for $n \neq 2$), and the solutions of $\hat\psi_n(x,y) = 0$ gives the $x$-values of the primitive $n$-torsion points, each of which has two $y$-values.  For more information on division polynomials (including a recursive definition), see \cite{Schoof, LangElliptic}, as well as exercise 3.7 in \cite{Sil1}.

\textbf{The 2-torsion:}
Use the equation (\ref{niceform}) for $\tilde{C}(a,b)$,
\begin{equation}
y^2 = x(x^2+(a^2-2b^2)x+(a^2+b^2)^2).
\end{equation}
The 2-torsion points occur when $y=0$, and the discriminant of the quadratic factor on the right hand side is $\Delta = -3a^2(a^2+4b^2) < 0$.  So the other two torsion points are non-real, and thus irrational.

\textbf{The 3-torsion:}
If we set $\hat{a}=a^2$ and $\hat{b}=b^2$, then $\psi_3$ may be written as
\begin{equation}
\psi_3 = 27x^4 + 18 (2 \hat{a}^2 + 10 \hat{a} \hat{b} - \hat{b}^2)x^2 - 4 (\hat{a} - 2 \hat{b}) (7 \hat{a}^2 + 26 \hat{a} \hat{b} + \hat{b}^2) - (2 \hat{a}^2 + 10 \hat{a} \hat{b} - \hat{b}^2)^2.
\end{equation}
The curve $\{\psi_3(a,b,x)=0\} \subseteq \Pj(1,1,2)$ is a fourfold ramified cover of the curve $\{\psi_3(\hat{a},\hat{b},x)=0\} \subseteq \Pj^2$, and this latter curve is birational to $\Pj^1$ (with homogeneous coordinates $[k:\ell]$).  
\begin{align}
[\hat{a},\hat{b},x] &\mapsto [\hat{b}+x:4\hat{a}+\hat{b}-3x],\\
[k,\ell] &\mapsto [64 k \ell^3 :
2187 k^4 - 162 k^2 \ell^2 - 40 k \ell^3 - \ell^4 : (27 k^2 - 18 k \ell - \ell^2)^2].
\end{align}
Let $(x,y)$ be a rational 3-torsion point on $\tilde{C}(a,b)$.  From the latter birational isomorphism, we obtain
\begin{align}
a^2 &= 64\lambda k \ell^3, \\
b^2 &= \lambda(2187 k^4 - 162 k^2 \ell^2 - 40 k \ell^3 - \ell^4), \\
x     &= \lambda(27 k^2 - 18 k \ell - \ell^2)^2,
\end{align}
for relatively prime integers $k, \ell$ and some $\lambda \in \Q$.  Dividing the first two equations, we obtain
\begin{equation}
\left(\frac{8bk}{a\ell}\right)^2 = 2187 \left(\frac{k}{\ell}\right)^5 - 162 \left(\frac{k}{\ell}\right)^3 - 40 \left(\frac{k}{\ell}\right)^2 - \left(\frac{k}{\ell}\right).
\end{equation}
This looks like the equation of a hyperelliptic curve, which we prove has only two rational points in Lemma \ref{hyperelllem}.  By that lemma, $[k:\ell]$ is either $[0:1]$ or $[1:0]$.  Thus, $a = 64\lambda k \ell^3 = 0$, a contradiction.  So there are no rational $3$-torsion points.

\textbf{The 4-torsion:}
The primitive $4$-division polynomial factors as
\begin{equation}
\hat{\psi}_4 = \frac{\psi_4}{\psi_2} = \frac{1}{729}(3x-(4a^2+b^2))(3x+(2a^2+5b^2))p(x),
\end{equation}
where
\begin{align}
p(x) = 81x^4&+54(a^2 - 2b^2)x^3+54(7a^4 + 26a^2b^2 + b^4)x^2\\
&-6(a^2 - 2b^2)(20a^4 + 82a^2b^2 - b^4)x\\
&+(76a^8 + 364a^6b^2 + 366a^4b^4 + 484a^2b^6 + b^8).
\end{align}
There are four possibly rational $4$-torsion points $R$, those such that $2R=T$, where $T$ is the nontrivial rational $2$-torsion point.  Each such point must be defined over a quadratic extension of $\Q(a,b)$; therefore, they must come from the first two factors of $\hat{\psi}_4$.  These points are
\begin{align}
(x,y) &= \left(\frac{1}{3}(4a^2+b^2),\pm\sqrt{3}a(a^2+b^2)\right) \mbox{ and} \\
(x,y) &= \left(-\frac{1}{3}(2a^2+5b^2),\pm i (a^2+b^2)\sqrt{a^2+4b^2}\right).
\end{align}
Both have irrational $y$-values for all $a,b \in \Q$.

\textbf{The 5-torsion:}
Setting $c=a^2-x$ and $d=b^2+x$, $\psi_5$ may be written as
\tiny
\begin{align}
3^{12}\psi_5 = \ & 23328 (c - 2 d)^5 (82 c^4 + 316 c^3 d + 510 c^2 d^2 + 292 c d^3 + 
   97 d^4)x^3 \\ & - 1296 (c - 2 d)^4 (2740 c^6 + 21552 c^5 d + 60396 c^4 d^2 + 
   79676 c^3 d^3 + 40623 c^2 d^4 + 12894 c d^5 + 157 d^6)x^2 \\ & + 36 (c - 2 d)^3 (69296 c^8 + 824032 c^7 d + 3756944 c^6 d^2 + 
   8338240 c^5 d^3 + 9252272 c^4 d^4 \\ & \hspace{1.173in} + 4279360 c^3 d^5 + 
   1262552 c^2 d^6 + 40744 c d^7 + 803 d^8)x \\ & - (430016 c^{12} + 3828480 c^{11} d - 1707456 c^{10} d^2 - 125933632 c^9 d^3 - 
 526043088 c^8 d^4 \\ & \hspace{.7in} - 830657952 c^7 d^5 - 223114416 c^6 d^6 + 
 677936160 c^5 d^7 + 406003284 c^4 d^8 \\ & \hspace{.7in} + 155427176 c^3 d^9 + 
 7889280 c^2 d^{10} + 216348 c d^{11} - 409 d^{12})
\end{align}

\normalsize
The curve $\{\psi_5(c,d,x)=0\} \subseteq \Pj^2$ may be checked (by computer, in Magma) to be geometric genus 1, with singular points $[c:d:x] = [0:0:1],[-1:4:1],[-5:2:1]$.  An elliptic model is given by
\begin{equation}
Y^2 Z + YZ^2 = X^3 - X^2 Z,
\end{equation}
with $[0:0:1] \mapsto [0:1:0] = \infty$.
This elliptic curve has Mordell-Weil group $\Z/5\Z$ (as computed by Magma).  The five rational points are: $[0:1:0]$, $[0:0:1]$, $[1:-1:1]$, $[1:0:1]$, and $[0:-1:1]$.  They correspond to the points $[0:0:1]$, $[-5:2:1]$, $[-1:4:1]$, $[-5:2:1]$, and $[-1:4:1]$ on $\{\psi_5(c,d,x)=0\}$, respectively.  (We see $[-1:4:1]$ and $[-5:2:1]$ twice because we've resolved a cuspidal singularity at $[0:0:1]$ and nodes at $[-1:4:1]$ and $[-5:2:1]$.)
If $[c:d:x]=[0:0:1]$, then $[a:b]=[\pm i:1]$, an irrational point.  If $[c:d:x]=[-1:4:1]$, then $[a:b]=[0:1]$, and $a=0$  If $[c:d:x]=[-5:2:1]$, then $[a:b]=[\pm 2i:1]$, an irrational point.  
\end{proof}

\begin{cor}
The only resonant Rossby wave triads corresponding to torsion points on any $\tilde{C}(a,b)$ are single wave solutions to the CHME.
\end{cor}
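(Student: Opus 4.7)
The plan is to deduce the corollary essentially as an immediate consequence of Theorem \ref{torsion} together with the explicit description of the trivial integer points on $C(a,b)$ given just before that theorem. The key point is that, when $a = 0$, the curve is degenerate and the triad $(a,b,x,y) = (0,b,x,y)$ is itself zonal (and excluded from $\Lambda$), so I may assume $a \neq 0$ and invoke Theorem \ref{torsion} directly.

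First I will observe that, under the normalization $C(a,b) \dashrightarrow \tilde{C}(a,b)$, the discussion following Proposition \ref{ellprop} identifies the two rational torsion points of $\tilde{C}(a,b)(\Q)$: the identity $O$ pulls back to $(x,y) = (0,0)$ on $C(a,b)$, while the $2$-torsion point $T$ pulls back to $(x,y) = (a,b)$. By Theorem \ref{torsion}, these exhaust the rational torsion. Since singular points of $C(a,b)$ are non-real by Proposition \ref{curvesing}, the normalization map is a bijection on rational points, so no further rational torsion points arise from the desingularization step.

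Next I will translate each of these two possibilities back into a Rossby wave triad via $(k_1,\ell_1,k_3,\ell_3) = (a,b,x,y)$ and $(k_2,\ell_2) = (x-a,y-b)$. The point $(x,y)=(a,b)$ yields the triad $\{(a,b),(0,0),(a,b)\}$; the zero wavevector contributes no wave (the corresponding amplitude equation forces $A_2'(\tau) = 0$, cf.\ the reduction described after the proposition on $X\setminus X^\circ$), so the triad degenerates to a single Rossby wave $\Re(C\psi_{a,b})$. The point $(x,y)=(0,0)$ yields $\{(a,b),(-a,-b),(0,0)\}$; since $\psi_{-a,-b} = \overline{\psi_{a,b}}$, the combination $\Re(A_1\psi_{a,b} + A_2\psi_{-a,-b})$ also collapses to the real part of a single Rossby wave of wavevector $(a,b)$.

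There is no real obstacle here; the work has already been done in Theorem \ref{torsion}. The only minor care needed is to ensure that the identification of the rational torsion on $\tilde{C}(a,b)$ with the points $(0,0)$ and $(a,b)$ on $C(a,b)$ is rigorously justified, which follows from applying the explicit rational map $\phi$ of Proposition \ref{ellprop} and checking that these are the only $(x,y)$ with $\phi(x,y)$ either the Weierstrass point at infinity or the unique rational $2$-torsion point of $\tilde{C}(a,b)$ in affine coordinates. Once that bookkeeping is done, the corollary is immediate.
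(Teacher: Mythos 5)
Your proposal is correct and follows essentially the same route as the paper: the paper's proof is the one-line observation that the corollary is a restatement of Theorem \ref{torsion}, relying on the discussion after Theorem \ref{ellthm} that identifies the rational torsion points $O=(0,0)$ and $T=(a,b)$ as single-wave ``triads.'' Your write-up simply makes explicit the bookkeeping (normalization, translation back to wavevector triads, the $a=0$ case) that the paper leaves implicit.
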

\begin{proof}
This result is just a restatement of Theorem \ref{torsion}.
\end{proof}

The $3$-torsion case comes down to determining the exact set of rational points on a particular hyperelliptic curve.  This requires more advanced methods than are used in the rest of this paper.  An overview of the methods available for enumerating rational points on curves is given by Stoll \cite{S}, whereas the method of Chabauty and Coleman is detailed in \cite{MP}.  There are also the original papers of Chabauty \cite{Ch} (in French) and Coleman \cite{Co1,Co2}.  Finally, we use the computer algebra system Magma for some minor computations; the Magma handbook \cite{magma} describes the algorithms used, primarily $n$-descent as described by Stoll \cite{S}.

\begin{lem}\label{hyperelllem}
The hyperelliptic curve $C$ with affine equation $y^2 = 2187 x^5 - 162 x^3 - 40 x^2 - x$ has only two rational points, $Q_1 = (0,0)$ and $Q_2 = \infty$.
\end{lem}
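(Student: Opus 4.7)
The plan is to apply the method of Chabauty and Coleman. The curve $C$ has affine equation $y^2 = f(x)$ with $f(x) = 2187x^5 - 162x^3 - 40x^2 - x$ of odd degree $5$, so $C$ is hyperelliptic of genus $g=2$ with a single point $Q_2 = \infty$ at infinity, and $Q_1 = (0,0)$ is visibly on $C$ since $f(0)=0$. The goal is to show these are the only rational points, so I would follow the standard two-step strategy: bound the rank of the Jacobian, then run $p$-adic Chabauty.

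First I would compute the Mordell-Weil rank $r$ of $J := \Jac(C)$ over $\Q$. For genus $2$ hyperelliptic Jacobians, Magma has a 2-descent routine returning an upper bound on $r$ (via the $2$-Selmer group), and for Chabauty-Coleman to apply we need the strict inequality $r < g = 2$. I expect $r \leq 1$; if the $2$-descent does not immediately yield this bound, I would either try a higher descent, or exploit the factorization $f(x) = x\cdot(2187x^4 - 162x^2 - 40x - 1)$ to look for isogeny factors of $J$ and bound $r$ by summing the ranks of the factors (at worst both factors are elliptic curves whose ranks can be computed directly).

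Second, assuming $r \leq 1$, I would pick a prime $p$ of good reduction, computing $\mathrm{disc}(f)$ and taking $p$ the smallest prime not dividing it (likely $p = 5$ or $7$; note $p=2,3$ are clearly bad). Over $\Q_p$, the space $H^0(C_{\Q_p}, \Omega^1)$ is $2$-dimensional with basis $dx/y,\ x\, dx/y$, and the condition $r \leq 1$ guarantees the existence of a nonzero regular $1$-form $\omega = (\alpha + \beta x)\, dx/y$ annihilating the $p$-adic closure $\overline{J(\Q)} \subseteq J(\Q_p)$; concretely, $(\alpha, \beta)$ is obtained by evaluating the logarithm on a known generator of the free part of $J(\Q)$. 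For each residue disk in $C(\Q_p)$, I would expand the Coleman integral $F(P) = \int_{Q_2}^{P} \omega$ as a $p$-adic power series in a local uniformizer and bound its zeros via the Newton polygon. Summing these residue-disk bounds should yield $|C(\Q)| \leq 2$, which together with the two known points proves the lemma.

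The main obstacle is the rank step: if the $2$-Selmer bound exceeds $1$ and no obvious isogeny decomposition reduces the computation, one would have to either perform a $4$-descent or invoke the Mordell-Weil sieve in combination with Chabauty at multiple primes. A secondary concern is the Coleman integration itself: the prime $p$ must be large enough that the power-series truncation yields a sharp Newton-polygon zero count in each residue disk, yet small enough to keep the computation tractable; in practice one often iterates through the first few good primes until the bound matches the count of known points.
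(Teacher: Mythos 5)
Your overall strategy is the same as the paper's: Chabauty--Coleman at a good prime after checking that $\Jac(C)$ has rank $1<g=2$ (the paper confirms rank exactly $1$ via Magma's \texttt{RankBounds} together with an explicit point of infinite order, so the complications you anticipate in the descent step do not arise). The genuine gap is in your last step, where you assume that summing the Newton-polygon zero counts over the residue disks ``should yield $|C(\Q)|\le 2$.'' It does not. The paper works at $p=7$, where $C(\F_7)=\{(0,0),\infty,(6,1),(6,6)\}$; computing the annihilating differential explicitly, its reduction is $(1+x)\,dx/y$, which vanishes to order $1$ at $(6,1)$ and $(6,6)$, so Coleman's bound allows up to \emph{two} points in each of those disks and only gives $\#C(\Q)\le 6$. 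Switching primes does not obviously help: the bound is at least $\#C(\F_p)$, the Weil bound forces $\#C(\F_p)\ge 3$ for all $p\ge 19$, and the small good primes do no better (e.g.\ $\#C(\F_5)=10$). So your proposed fallback of iterating through primes stalls, and the extra input you reserve only for the rank step (a sieve-type argument) is in fact indispensable here.

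The paper closes this gap with a separate descent on the two recalcitrant residue classes $x\equiv 6\pmod 7$. Factoring $y^2=x\,(2187x^4-162x^2-40x-1)$, a divisibility argument forces $x$ and the quartic cofactor to be simultaneously squares or simultaneously $3$ times squares; since $6$ is a quadratic nonresidue mod $7$, only the latter case survives, yielding the genus-one cover $v^2=81u^4-54u^2-40u-3$ with $u=3x$. This curve maps to the elliptic curve $t^2+t=s^3+20$, whose Mordell--Weil group (rank $1$, with $3$-torsion) reduces to a proper subgroup of $E(\F_7)$; tracing the permitted residues of $u$ back shows none is congruent to $3\cdot 6\equiv 4\pmod 7$, eliminating both disks. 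Your proof needs an argument of this kind --- a two-cover descent combined with reduction of the cover's Mordell--Weil group, or some other way to dispose of the second potential zero in each bad disk --- and as written it stops short of the statement.
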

\begin{proof}
In the first part of the proof, we will bound the number of rational points in each congruence class modulo 7 by the method of Chabauty and Coleman.  In the second part of the proof, we rule out some points using a descent argument.

We begin with the method of Chabauty and Coleman.  The discriminant of $2187 x^5 - 162 x^3 - 40 x^2 - x$ is $-2^{20}3^{17}$, so $C$ has good reduction away from $2$ and $3$.  We use the prime $p=7$ throughout.

Let $J=\Jac(C)$, $r$ be the rank of $J(\Q)$, and $g$ the genus of $C$.  Chabauty will apply if $r < g$.  The genus of a hyperelliptic curve of odd degree $d$ is $(d-1)/2$, so $g=2$.  A computation in Magma shows that $r=1$.  Specifically, \texttt{RankBounds(J)} returns $0 \leq r \leq 1$, \texttt{TorsionSubgroup(J)} returns $\Z/2\Z$, and \texttt{Points(J:Bound:=1000)} returns several points including $P:=[Q_1-Q_2]$ and $R:=[(\frac{1}{27}(-4+\sqrt{-11}),\frac{2}{81}(7+5\sqrt{-11}))+(\frac{1}{27}(-4-\sqrt{-11}),\frac{2}{81}(7-5\sqrt{-11}))-2Q_2]$.  The point $P$ has order $2$, so $R$ has infinite order. 

$C(\F_7)$ has four rational points: $(0,0)$, $\infty$, $(6,1)$, and $(6,6)$.  A straightforward application of Coleman's theorem (Theorem 5.3 of \cite{MP}) gives
\begin{equation}
\#C(Q) \leq \#C(\F_7) + (2g-2) = 6.
\end{equation}
To show $\#C(Q)=2$, we have to do more work.

The point $P+42R = [Q_1' + Q_2' - 2Q_2]$, where $Q_1'$ and $Q_2'$ are defined over $\Q_7$.
\begin{align}
Q_1' &= (5(7)^2+1(7)^3+1(7)^4+\cdots, 4(7)+3(7)^2+1(7)^3+\cdots),\\
Q_2' &= (6(7)^{-2}+0(7)^{-1}+3+\cdots,5(7)^{-5}+0(7)^{-4}+2(7)^{-3}+\cdots).
\end{align}

Let $\omega$ be a differential satisfying (i)-(iii) of sections 5.1 and 5.4 of \cite{MP}; $\omega$ is a $\Z_7$-linear combination of $\frac{dx}{y}$ and $\frac{xdx}{y}$ satisfying
\begin{equation}\label{omegaeq}
\int_{Q_1}^{Q_1'} \omega + \int_{Q_2}^{Q_2'} \omega = 0.
\end{equation}
(For background about $p$-adic integration, see \cite{MP} and the references therein.)
Because $Q_1$ and $Q_1'$ have the same reduction modulo 7, as have $Q_2$ and $Q_2'$, the relevant $p$-adic integrals may be computed by antidifferentiation.
\begin{align}
\int_{Q_1}^{Q_1'}\frac{dx}{y} &= \int_0^{4(7)+3(7)^2+1(7)^3+\cdots}(-2-160y^2-18228y^4-\cdots)dy \\
&= \left.\left(-2y-\frac{160}{3}y^3-\frac{18228}{5}y^5-\cdots\right)\right|_{y=0}^{4(7)+3(7)^2+1(7)^3+\cdots}\\
&= 6(7)+6(7)^2+1(7)^3+\cdots.\\
\int_{Q_1}^{Q_1'}\frac{xdx}{y} &= \int_0^{4(7)+3(7)^2+1(7)^3+\cdots}(2y^2+240y^4+30704y^6+\cdots)dy \\
&= \left.\left(\frac{2}{3}y^3+48y^5+\frac{30704}{7}y^7+\cdots\right)\right|_{y=0}^{4(7)+3(7)^2+1(7)^3+\cdots}\\
&= 3(7)^3+6(7)^4+6(7)^5+\cdots.
\end{align}
Local coordinates at $\infty$ are given by $z=\frac{1}{x}$ and $w=\frac{y}{x^3}$.  We have $\frac{dx}{y}=-\frac{zdz}{y}$ and $\frac{xdx}{y}=-\frac{dz}{y}$. ...
\begin{align}
\int_{Q_2}^{Q_2'}\frac{dx}{y} &= \int_0^{2(7)+5(7^2)+4(7^3)+\cdots}(-2\cdot3^{-14}w^2-16\cdot3^{-31}w^6-\cdots)dw \\
&= \left.\left(-2\cdot3^{-15}w^3-\frac{16}{7}\cdot3^{-31}w^7-\cdots\right)\right|_{w=0}^{2(7)+5(7^2)+4(7^3)+\cdots}\\
&= 2(7)^3+1(7)^4+0(7)^5+\cdots.\\
\int_{Q_2}^{Q_2'}\frac{xdx}{y} &= \int_0^{2(7)+5(7^2)+4(7^3)+\cdots}(-2\cdot3^{-7}-4\cdot3^{-23}w^4-\cdots)dw \\
&= \left.\left(-2\cdot3^{-7}w-\frac{4}{5}\cdot3^{-23}w^5-\cdots\right)\right|_{w=0}^{2(7)+5(7^2)+4(7^3)+\cdots}\\
&= (7)+2(7)^2+0(7)^3+\cdots.
\end{align}
Adding,
\begin{align}
\int_{Q_1}^{Q_1'} \frac{dx}{y} + \int_{Q_2}^{Q_2'} \frac{dx}{y} &= 6(7)+6(7)^2+3(7)^3+\cdots \\
\int_{Q_1}^{Q_1'} \frac{xdx}{y} + \int_{Q_2}^{Q_2'} \frac{xdx}{y} &= (7)+2(7)^2+3(7)^3+\cdots
\end{align}
By (\ref{omegaeq}), we must have (up to an irrelevant scalar in $\F_7^\times$)
\begin{equation}
\omega \con \frac{dx}{y}+\frac{xdx}{y} \Mod{7}.
\end{equation}
The differential $\omega \con \frac{(1+x)dx}{y} = -\frac{(z+1)dz}{w}$ has order of vanishing $m=0$ at $(0,0)$ and $\infty$, and $m=1$ at $(6,1)$ and $(6,6)$.  By part (a) of Coleman's theorem (Theorem 5.3 of \cite{MP}), $C(\Q)$ contains at most one point in each of the congruence classes of $(0,0)$ and $\infty$, and at most two points in each of the congruence classes of $(6,1)$ and $(6,6)$.  Since we've already found $Q_1=(0,0)$ and $Q_2=\infty$, all that remains is to rule out the classes of $(6,1)$ and $(6,6)$.

Assume $x \con 6 \Mod{7}$.  By factoring the RHS of the equation of $C$,
\begin{equation}
y^2=x(2187 x^4 - 162 x^2 - 40 x - 1),
\end{equation}
and a divisibility argument shows that any rational solution must have either $x$ and $(2187 x^4 - 162 x^2 - 40 x - 1)$ both squares, or both $3$ times squares.  But $\left(\frac{6}{7}\right)=-1$, so $x$ is not a square.  Thus, writing $(2187 x^4 - 162 x^2 - 40 x - 1)=v^2/3$ and $x=u/3=w^2/3$,
\begin{equation}
v^2=81 u^4 - 54 u^2 - 40 u - 3.
\end{equation}
Call $D$ the smooth completion of the curve defined by this equation.  Under the birational transformation $\phi : D \simto E$ given by
\begin{align}
s&=\frac{9}{8}(9u^2 - v - 1),\\
t&=\frac{1}{16}(729 u^3 - 243 u - 81uv - 98),
\end{align}
this equation becomes the elliptic curve $E$ defined by
\begin{equation}
t^2+t=s^3+20.
\end{equation}
The Mordell-Weil group of $E$ is generated by the points $(0,4)$, of order 3, and $(-2,3)$, of infinite order.  $E(\F_7)$ has order $12$, but $(0,4)$ and $(-2,3)$ reduce to points of order $3$ and $2$, respectively.  So, $\ol{E(\Q)}$ is not all of $E(\F_7)$; specifically,
\begin{equation}
\ol{E(\Q)} = \{\infty,(4,6),(0,4),(5,3),(0,2),(4,0)\}.
\end{equation}
Converting back to coordinates on $D$ via $\phi^{-1}$,
\begin{equation}
\ol{D(\Q)} = \{\infty_-,\infty_+,(6,3),(3,4),(3,2),(6,0)\}.
\end{equation}
But we've restricted to the case where $x \con 6 \Mod{7}$, that is, $u = 3x \con 4 \Mod{7}$, so it's impossible to have $u \con \infty, 3, 6 \Mod{7}$, and there are no additional rational points on $C$.
\end{proof}

\section{Special Case: Resonant Wavevectors with Zonal Group Velocity Zero}\label{egz}

Rossby waves have phase velocity vector $C$ and group velocity vector $C_g$, where
\begin{align}
C &= (C_x,C_y) = \left(\frac{\omega}{k},\frac{\omega}{\ell}\right) = \left(-\frac{\beta}{k^2+\ell^2},-\frac{\beta k/\ell}{k^2+\ell^2}\right),\\
C_g &= (C_{gx},C_{gy}) = \left(\frac{\partial \omega}{\partial k},\frac{\partial \omega}{\partial \ell}\right) = \left(\frac{\beta\left(k^2-\ell^2\right)}{\left(k^2+\ell^2\right)^2},\frac{2\beta k \ell}{\left(k^2+\ell^2\right)^2}\right).
\end{align}
See \cite{Ped} section 3.19 for details.

Assume $k\ell \neq 0$.  
The fact that the zonal component of $C$ are negative means that crests of Rossby waves travel westward.  
However, the zonal component $C_{gx}$ of $C_g$ can be either positive, zero, or negative.  If $C_{gx} = 0$, wave packets stay in the same longitude (propagating either due northward or due southward).

We ask: What resonant wavevectors have zonal group velocity zero?
All wavevectors $(k,\ell)$ with $C_{gx}=0$ are exactly those with $k^2-\ell^2=0$, that is, $k=\pm \ell$.  By symmetry, it suffices to consider $(k,\ell)=(n,n)$ with $n>0$.

On $C(1,1)$, there are only the four trivial integer points $0=(0,0)$, $T=(1,1)$, $P=(0,2)$, and $T+P=(1,-1)$.  By a computation in Magma, we find that $C(1,1)$ has rank $1$, and its Mordell-Weil group is generated by $T$ and $P$.

\begin{figure}[h]
\renewcommand{\arraystretch}{1.5}
\hspace{-.25in}
\begin{tabular}{c|c|c|c}
point & $(Z,W)$ & $(x,y)$ exactly & $(x,y)$ numerically \\
\hline
$0$ & $\infty$ & $(0,0)$ & $(0.000,0.000)$ \\
$T$ & $(0,0)$ & $(1,1)$ &  $(1.000,1.000)$ \\ 
\hline
$P$ & $\left(1,2\right)$ & $\left(0,2\right)$ & $(0.000,2.000)$ \\
$P+T$ & $\left(4,-8\right)$ & $\left(1,-1\right)$ & $(1.000,-1.000)$ \\ 
$-P$ & $\left(1,-2\right)$ & $\left(\frac{16}{13},\frac{2}{13}\right)$ & $(1.231,0.154)$ \\
$-P+T$ & $\left(4,8\right)$ & $\left(-\frac{3}{13},\frac{11}{13}\right)$ & $(-0.231,0.846)$ \\ 
\hline
$2P$ & $\left(\frac{9}{16},-\frac{93}{64}\right)$ & $\left(\frac{256}{229},\frac{88}{229}\right)$ & $(1.118,0.384)$ \\
$2P+T$ & $\left(\frac{64}{9},\frac{496}{27}\right)$ & $\left(-\frac{27}{229},\frac{141}{229}\right)$ & $(-0.118,0.616)$ \\ 
$-2P$ & $\left(\frac{9}{16},\frac{93}{64}\right)$ & $\left(\frac{1792}{3277},\frac{6568}{3277}\right)$ & $(0.547,2.004)$ \\
$-2P+T$ & $\left(\frac{64}{9},\frac{496}{27}\right)$ & $\left(\frac{1485}{3277}, -\frac{3291}{3277}\right)$ & $(0.453,-1.004)$ \\ 
\hline
$3P$ & $\left(\frac{3025}{49},-\frac{165110}{343}\right)$ & $\left(\frac{5488}{504613}, -\frac{147742}{504613}\right)$ & $(0.011,-0.293)$\\
$3P+T$ & $\left(\frac{196}{3025},\frac{84056}{166375}\right)$ & $\left(\frac{499125}{504613}, \frac{652355}{504613}\right)$ & $(0.989,1.293)$ \\
$-3P$ & $\left(\frac{3025}{49},\frac{165110}{343}\right)$ & $\left(-\frac{1020768}{155870857}, \frac{35570402}{155870857}\right)$ & $(-0.007,-0.228)$\\
$-3P+T$ & $\left(\frac{196}{3025},-\frac{84056}{166375}\right)$ & $\left(\frac{156891625}{155870857}, \frac{120300455}{155870857}\right)$ & $(1.007,0.772)$ \\ 
\hline
$4P$ & $\left(\frac{889249}{553536},\frac{1164191023}{411830784}\right)$ & $\left(-\frac{11531261952}{34589637433}, \frac{55149444912}{34589637433}\right)$ & $(-0.333,1.594)$ \\
$4P+T$ & $\left(\frac{2214144}{889249}, -\frac{3675053536}{838561807}\right)$ & $\left(\frac{46120899385}{34589637433}, -\frac{20559807479}{34589637433}\right)$ & $(1.333,-0.594)$ \\
$-4P$ & $\left(\frac{889249}{553536},-\frac{1164191023}{411830784}\right)$ & $\left(\frac{79003970279424}{58803854910601}, -\frac{9603665729328}{58803854910601}\right)$ & $(1.343,-0.163)$ \\
$-4P+T$ & $\left(\frac{2214144}{889249}, \frac{3675053536}{838561807}\right)$ & $\left(-\frac{20200115368823}{58803854910601}, \frac{68407520639929}{58803854910601}\right)$ & $(-0.344,1.163)$
\end{tabular}
\caption{The first few rational points on $C(1,1)$, computed in Magma.}
\label{figtab}
\end{figure}

The sequence $13, 229, 3277, 504613, 155870857, 34589637433, 58803854910601, \ldots$ of denominators of the $(x,y)$ in Figure \ref{figtab} are the natural numbers $n$ so that $(n,n) \in \Lambda'$.  The resonant wavevectors with zonal group velocity zero are those of the form $(mn,mn)$ and $(mn,-mn)$, where $m \in \Z$ and $n$ is in this sequence.
To our knowledge, we have the first efficient method for computing this sequence, and this paper is the first place in the literature where it has been computed beyond $3277$.

\section{Acknowledgments}

Thank you to Jeffrey C. Lagarias for referring the author to the paper of Kishimoto and Yoneda \cite{KY}, and to Charles R. Doering and Jeffrey C. Lagarias for reading drafts of this paper.  Thank you to Miguel Bustamante, Charles R. Doering, Zaher Hani, Jeffrey C. Lagarias, and Wei Ho for helpful conversations and correspondences.

%%%%%%%%%%%%%%%%%%%%

\bibliographystyle{siam}{}
\bibliography{references}

%%%%%%%%%%%%%%%%%%%%

\end{document}